\newcommand{\DP}{\textnormal{DP}}
\newcommand{\SDP}{\textnormal{SDP}}
\newcommand{\RDP}{\textnormal{RDP}}
\newcommand{\SEP}{\textnormal{SEP}}
\newcommand{\REP}{\textnormal{REP}}
\newcommand{\SAT}{\textnormal{SAT}}
\newcommand{\N}{\mathbb{N}}
\newcommand{\Z}{\mathbb{Z}}
\newcommand{\Col}{\mathcal{C}}
\newcommand{\F}{\mathbb{F}}
\newcommand{\Fo}{\mathcal{F}}
\definecolor{blue}{rgb}{0.29,0.56,0.89}
\definecolor{red}{rgb}{0.82,0.01,0.11}
\definecolor{orange}{rgb}{0.96,0.65,0.14}
\newcommand{\tw}{%
	\tikz[baseline]{
		\draw[fill=white] (-0.3ex,-0.3ex) -- (1.7ex,-0.3ex) -- (1.7ex,1.7ex) -- (-0.3ex,1.7ex) -- cycle;
}}
\newcommand{\tbl}{%
	\tikz[baseline]{
		\draw[fill=blue] (-0.3ex,-0.3ex) -- (1.7ex,-0.3ex) -- (1.7ex,1.7ex) -- (-0.3ex,1.7ex) -- cycle;
}}
\newcommand{\tr}{%
	\tikz[baseline]{
		\draw[fill=red] (-0.3ex,-0.3ex) -- (1.7ex,-0.3ex) -- (1.7ex,1.7ex) -- (-0.3ex,1.7ex) -- cycle;
}}
\newcommand{\torg}{%
	\tikz[baseline]{
		\draw[fill=orange] (-0.3ex,-0.3ex) -- (1.7ex,-0.3ex) -- (1.7ex,1.7ex) -- (-0.3ex,1.7ex) -- cycle;
}}
\title{Contributions to the Domino Problem: Seeding, Recurrence and Satisfiability}
\author{Nicol\'as {Bitar}}{Université Paris-Saclay, CNRS, LISN, 91190 Gif-sur-Yvette, France \and \url{https://www.lri.fr/~bittar/} }{nicolas.bitar@lisn.fr}{https://orcid.org/0000-0002-3460-9442}{}
\authorrunning{N. Bitar} 
\keywords{Tilings, Domino problem, SAT, Computability, Finitely generated groups} 
\begin{document}

\maketitle

\begin{abstract}
We study the seeded domino problem, the recurring domino problem and the $k$-SAT problem on finitely generated groups. These problems are generalization of their original versions on $\mathbb{Z}^2$ that were shown to be undecidable using the domino problem. We show that the seeded and recurring domino problems on a group are invariant under changes in the generating set, are many-one reduced from the respective problems on subgroups, and are positive equivalent to the problems on finite index subgroups. This leads to showing that the recurring domino problem is decidable for free groups. Coupled with the invariance properties, we conjecture that the only groups in which the seeded and recurring domino problems are decidable are virtually free groups. In the case of the $k$-SAT problem, we introduce a new generalization that is compatible with decision problems on finitely generated groups. We show that the subgroup membership problem many-one reduces to the $2$-SAT problem, that in certain cases the $k$-SAT problem many one reduces to the domino problem, and finally that the domino problem reduces to $3$-SAT for the class of scalable groups.
\end{abstract}

\section{Introduction}
\label{sec:intro}

The domino problem, as originally formulated, is the decision procedure of determining if a given finite set of unit squares with colored edges, known as Wang tiles, can tile the infinite plane while respecting an adjacency condition: two squares can be placed next to each other if there shared edge has the same color. This problem was introduced by Wang to study the decidability of the $\forall\exists\forall$ fragment of first order logic \cite{wang1961proving}, who conjectured that the domino problem was decidable. It turns out that this is not the case; a now classic result by Berger states that this problem is undecidable \cite{berger1966undecidability}. Since then, many proofs of the undecidability of the problem have been found (see \cite{jeandel2020domino}). 

Perhaps one of the fundamental features of the domino problem is its usefulness in proving the undecidability of many decision problems, ranging from problems in symbolic dynamics such as the infinite snake problem \cite{adleman2009snake} and the injectivity and surjectivity of two-dimensional cellular automata \cite{kari1990reversibility,kari1994automata}, to problems from other areas such as the $k$-SAT problem on $\Z^2$ \cite{freedman1999sat}, the spectral gap problem of quantum many-body systems \cite{cubitt2022undecidability} and translation monotilings~\cite{greenfeld2023undecidability}. In fact, the Wang tiling model can be seen as a natural model to encode computation and prove complexity lower bounds \cite{vanEmde1997convinience}.

In recent years, the domino problem has found new life in the context of symbolic dynamics over finitely generated groups \cite{aubrun2018domino}. The aim has been to establish which algebraic conditions make the problem of deciding whether the group is tileable subject to a finite number of local constraints --- i.e., the domino problem on the group --- undecidable. This project has culminated in a conjecture stating that the class of groups with decidable domino problem is the class of virtually free groups. This same turn towards generalizing to groups has been made for different problems, such as aperiodic tilings \cite{rieck2022strongly} and domino snake problems \cite{aubrun2023snake}. This article follows the same path for three different decision problems: the seeded domino problem, the recurrent domino problem, and the $k$-SAT problem. We introduce generalizations of these problems for finitely generated groups to understand how the algebraic and computational properties of the underlying group influence the decidability and to obtain new tools to show the undecidability of problems on finitely generated groups.

\paragraph*{Variants of Tiling Problems}

Variations on the domino problem have been present since its conception. This is the case of the seeded domino problem, whose undecidability was established even before the domino problem's \cite{kahr1962entscheidungsproblem,buchi1962turing}. In the years since many more variations have been introduced: the periodic domino problem \cite{gurevich1972domino,jeandel2010periodic}, domino snake problems \cite{myers1979decidability}, the recurrent domino problem \cite{harel1985recurring}, the aperiodic domino problem \cite{callard2022aperiodic, grandjean2018aperiodic}, and even a variant where the underlying structure is fixed to be a geometric tiling of the plane \cite{hellouin2023rhombus}.

In this article, we generalize the seeded and recurrent problems to the context of finitely generated groups. The seeded version asks, given an alphabet, a finite set of local rules and a target letter from the alphabet, if there exists a coloring of the group subject to the local rules where the target letter appears. The recurrent version has the same input but asks if such a configuration exists where the target letter appears infinitely often. We will begin by formally introducing these problems and establishing connections to the domino problem: the problem many-one reduces to both variants (Section 2). Next, we show that the decidability of both problems is independent of the chosen generating set for the group, that the problems many-one reduce from subgroups and are in fact equivalent in the case of finite index subgroups (Section 3). Furthermore, we show that the recurrent problem is decidable on free groups, which paired with the domino conjecture and inheritance properties, allows us to state the following extension of the domino conjecture

\begin{conjecture}
Let $G$ be a finite generated group. The following are equivalent,
		\begin{itemize}
			\item $G$ is virtually free,
			\item the domino problem on $G$ is decidable,
			\item the seeded domino problem on $G$ is decidable,
			\item the recurrent domino problem on $G$ is decidable.
		\end{itemize}
\end{conjecture}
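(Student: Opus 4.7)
The plan is to reduce the equivalence to the standing domino conjecture by combining the reductions in Section 2 with the finite-index inheritance results of Section 3. I would organise the argument as a cycle of implications rather than as four separate biconditionals, so that only two nontrivial arrows remain after the bookkeeping.

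First, the easy direction coming from the many-one reductions already in hand. Since $\DP$ many-one reduces to both $\SDP$ and $\RDP$, the implications ``$\SDP$ decidable $\Rightarrow$ $\DP$ decidable'' and ``$\RDP$ decidable $\Rightarrow$ $\DP$ decidable'' are immediate. Thus only the arrows \emph{virtually free $\Rightarrow$ $\SDP$ decidable}, \emph{virtually free $\Rightarrow$ $\RDP$ decidable}, \emph{virtually free $\Rightarrow$ $\DP$ decidable}, and \emph{$\DP$ decidable $\Rightarrow$ virtually free} remain.

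Second, the virtually free direction. That $\DP$ is decidable on virtually free groups is by now standard, using a Muller--Schupp-style tree-automaton construction on a free subgroup together with finite-index invariance of $\DP$. For $\RDP$, the paper already establishes decidability on free groups and the positive equivalence of $\RDP$ under passage to finite-index subgroups (Section 3); these two facts together give $\RDP$ decidability on every virtually free group. For $\SDP$, I would mirror the $\RDP$ argument: first prove decidability on free groups by adapting the tree-automaton approach, using a distinguished state to mark the seed tile so that the resulting automaton checks both the local rules and the presence of the designated letter, and then transfer to virtually free groups via the finite-index equivalence from Section 3. Once these three implications are in place, one has \emph{virtually free} implies each of the three decidability statements.

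Third, the main obstacle. The remaining implication ``$\DP$ decidable $\Rightarrow$ $G$ virtually free'' is exactly the original domino conjecture for finitely generated groups, which is open. So in honest terms this proof plan does not close the stated conjecture; it reduces it to the conjunction of (i) the auxiliary lemma that $\SDP$ is decidable on free groups (which I expect to be routine via pushdown/tree-automata on the Cayley graph), and (ii) the original domino conjecture. Step (ii) is where essentially all the difficulty lies: every known lower-bound technique uses Berger-style constructions that require a two-dimensional flavour, and proving the converse — that decidability of $\DP$ forces a tree-like Cayley graph — would require a structural characterisation of groups admitting no aperiodic SFT-like obstruction, which is the real open problem in the area.
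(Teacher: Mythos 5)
Your plan coincides with what the paper actually does: the statement is left as a conjecture, and the paper only establishes (i) the unconditional forward implications (virtually free $\Rightarrow$ $\DP$, $\SDP$, $\RDP$ all decidable) and (ii) the conditional equivalence, stated as a corollary of the form ``if the Domino Conjecture is true, then the four items are equivalent,'' using exactly your observation that $\DP \leq_p \SDP$ and $\DP \leq_p \RDP$ close the cycle once ``$\DP$ decidable $\Rightarrow$ virtually free'' is granted. (One small correction: the paper only proves \emph{positive} reductions $\DP \leq_p \SDP$ and $\DP \leq_p \RDP$, not many-one reductions, but this still transfers decidability, so your arrows are fine.) The one genuine divergence is in how decidability of $\SDP$ on virtually free groups is obtained: you propose proving it first on free groups by a tree-automaton argument with a distinguished seed state and then transferring through the finite-index equivalence of Lemma~\ref{lem:fi}, mirroring the $\RDP$ treatment; the paper instead invokes the expressibility of the seeded problem in monadic second order logic together with the decidability of MSO for virtually free groups, which handles the whole virtually free class in one stroke but is less self-contained. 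Your route would require actually writing down and verifying the seeded free-group algorithm (plausibly a variant of Piantadosi's completeness criterion restricted to letters reachable from $a_0$ in the tileset graph), whereas the MSO route outsources that work; both are legitimate. As you say yourself, neither approach proves the conjecture --- the implication ``$\DP$ decidable $\Rightarrow$ $G$ virtually free'' is the open Domino Conjecture, and the paper is equally explicit that the equivalence is conditional on it.
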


\paragraph*{$k$-SAT and the Limit of Polynomial Time Problems}

The $k$-SAT problem on groups was introduced by Freedman in \cite{freedman1999sat}. The idea of the generalization was to extend the difference between $2$-SAT and $3$-SAT, which are in $\textbf{P}$ and $\textbf{NP}$ respectively, to an infinite context making the former problem decidable and the latter undecidable. This is inserted into the broader program outlined in \cite{freedman1998limit} that searches to separate the complexity classes $\textbf{P}$ and $\textbf{NP}$ by limit processes, the idea being that limiting behaviors of polynomial time problems should be decidable. 

In this article, we slightly alter the generalization proposed by Freedman to make the decision problem compatible with finitely generated groups (Section 4). Similar generalizations have been made for other classic decision problems, such as Post's correspondence problem \cite{myasnikov2014post,ciobanu2021variations,ciobanu2022post}. We show that the subgroup membership problem of the group many-one reduces to the complement of the $2$-SAT problem and that in the class of groups where the former is decidable, the $k$-SAT problem many-one reduces to the domino problem for all $k>1$. In conjunction with the work of Piantadosi \cite{piantodosi2008free} and the domino problem's inheritance properties, this result implies that the $k$-SAT problem is decidable for virtually free groups. We introduce the class of scalable groups to find an equivalence between both decision problems. A finitely generated group is scalable if it contains a proper finite index subgroup that is isomorphic to the group. We show that for this class of groups, the domino problem many-one reduces to the $3$-SAT problem. The proof of this result is inspired by techniques from~\cite{freedman1999sat} that are adapted to better suit our definition of the decision problem.\\

We begin the article by introducing some preliminary notions from computability and symbolic dynamics in Section 1. Section 2 introduces the domino problem, the seeded and recurrent variants, and some basic properties. In Section 3, we establish the invariance of the decidability of the seeded and recurrent problems under changes in the generating set of the group, in addition to some inheritance properties, most notably subgroups. We also prove that the recurrent domino problem is decidable for free groups and show that the seeded and recurrent problems are subject to the same conjecture as the normal one. Section 4 is devoted to the study of the $k$-SAT problem on finitely generated groups. We formally introduce the decision problem as well as its connection to the subgroup membership problem and a reduction to the domino problem. We finally present the class of scalable groups and show that for them, the domino problem many-one reduces to the $3$-SAT problem. 

\section{Preliminaries}

Given a finite alphabet $A$, we denote the set of words of length $n$ by $A^n$, the set of words of length less or equal to $n$ by $A^{\leq n}$, and the set of all finite words over $A$ by $A^*$, including the empty word $\varepsilon$. The length of a word $w$ is denoted by $|w|$. We denote the free group on $n$ generators by $\F_n$. Throughout the article, $G$ will be an infinite group.

\subsection{Computability and Group Theory}

We quickly recall some notions from computability theory and combinatorial group theory that will be needed in the article. See \cite{Rogers} for a reference on computability and reductions, and see \cite{LyndonSchupp} for a reference on combinatorial group theory.

\begin{definition}
	Let $L\subseteq A^*$ and $L'\subseteq B^*$ be two languages. We say,
	\begin{itemize}
		\item $L$ many-one reduces to $L'$, denoted $L\leq_m L'$, if there exists a computable function $f:A^*\to B^*$ such that $w\in L$ if and only if $f(w)\in L'$ for every $w$.
		
		\item $L$ positive-reduces to $L'$, denoted $L\leq_p L'$ if for any $w$ one can compute finitely many finite sets $F_1(w), ..., F_n(w)$ such that $w\in L$ if and only if there exists $i\in\{1, ..., n\}$ such that $F_i(w)\subseteq L'$.
	\end{itemize}
	For both notions of reducibility, the induced notion of equivalence will be denoted by $L\equiv_* L'$, meaning $L\leq_* L'$ and $L'\leq_* L$.
\end{definition}
Notice that many-one reducibility implies positive-reducibility. The complement of a decision problem $D$, denoted $\textnormal{co}D$, is the set of all "no" instances of $D$.\\

Given $G$ a finitely generated group (f.g.) and $S$ a finite generating set, elements in the group are represented as words over the alphabet $S\cup S^{-1}$ through the evaluation function $w\mapsto \overline{w}$. Two words $w$ and $v$ represent the same element when $\overline{w} = \overline{v}$, and we denote it by $w =_G v$. We say a word is \emph{reduced} if it contains no factor of the form $ss^{-1}$ or $s^{-1}s$ with $s\in S$. The length of an element $g\in G$ with respect to $S$, denoted $|g|_S$, is the length of a shortest word $w\in (S\cup S^{-1})^*$ such that $g = \overline{w}$. A group is \emph{virtually free} if it contains a finite index subgroup isomorphic to a free group.

\subsection{Subshifts of Finite Type}

Let $A$ be a finite alphabet and $G$ a finitely generated group. The \emph{full-shift} on $A$ is the set of configurations $A^G = \{x\colon G\to A\}$. This space is acted upon by $G$ in the form of left translations: given $g\in G$ and $x\in A^G$,
$$g\cdot x(h) = x(g^{-1}h).$$

Let $F$ be a finite subset of $G$. We call $p\in A^{F}$ a f\emph{pattern} of support $F$. We say a pattern $p$ appears in a configuration $x\in A^G$ if there exists $g\in G$ such that $p(h) = x(gh)$ for all $h\in F$. The \emph{cylinder} defined by a pattern $p\in A^F$ at $g\in G$ is given by 
$$[p]_g = \{x\in A^G \colon\ \forall h\in F,\ x(gh) = p(h)\}.$$

Given a set of patterns $\Fo$, we define the $G$-\emph{subshift} $X_{\Fo}$ as the set of configurations where no pattern from $\Fo$ appears. That is,
$$X_{\mathcal{F}} = \{x\in A^G \colon\ \forall p\in\mathcal{F},\ p \text{ does not appear in } x\} = A^G\setminus\bigcap_{g\in G, p\in\Fo}[p]_g.$$
If $\mathcal{F}$ is finite, we say $X_{\mathcal{F}}$ is a $G$-\emph{subshift of finite type} ($G$-SFT). We will simply write SFT when the group is clear from context.\\

Let $S$ be a finite generating set for $G$. We say a pattern $p$ is \emph{nearest neighbor} if its support is given by $\{1_G, s\}$ with $s\in S$. We will denote nearest neighbor patterns through tuples $(a,b,s)$ representing $p(1_G) = a$ and $p(s) = b$. A subshift defined by a set of nearest neighbor forbidden patterns is known as a nearest neighbor subshift. These subshifts are necessarily SFTs. Given a set of nearest neighbor patterns $\Fo$, we define its corresponding tileset graph, $\Gamma_{\Fo}$, by the set of vertices $A$, and edges given by $(a,b,s)\not\in\Fo$, where $a$ is its initial vertex, $b$ its final vertex and $s$ its label (see Figure \ref{fig:tileset} for an example).
\begin{figure}[h]
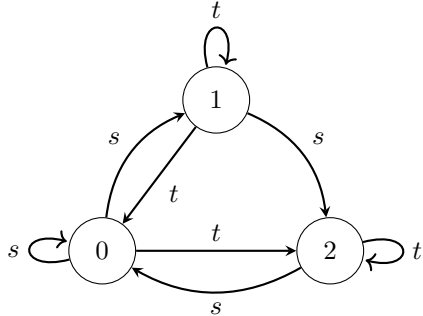

	\centering
	\includestandalone{figures/tileset_graph}
	\caption{An example of a tileset graph $\Gamma_{\Fo}$ for the alphabet $\{0,1,2\}$ and a group with generators $s$ and $t$. Edges present in the graph are exactly those that are not in $\Fo$, for example $(2,1,t)\in\Fo$.}
	\label{fig:tileset}
\end{figure}	

These graphs will help us in Section \ref{sec:free} when working with free groups. It is a well known fact \cite{LindMarcus} that nearest neighbor SFTs over $\Z$ are characterized as the set of bi-infinite walks on their corresponding tileset graph.

\section{The Domino Problem and its Variants}

We begin with a formal definition of the domino problem that generalizes the original formulation with Wang tiles.

\begin{definition}Let $G$ be a finitely generated group and $S$ a finite generating set. The \emph{domino problem} on $G$ with respect to $S$ is the decision problem that, given an alphabet $A$ and a finite set of nearest neighbor forbidden patterns $\Fo$, determines if the corresponding subshift $X_{\Fo}$ is empty. We denote this problem by $\DP(G,S)$.
\end{definition}

It has been shown that the decidability of the problem is invariant under changes in the generating set, that is, if $S_1$ and $S_2$ are two finite generating sets for $G$, then $\DP(G,S_1)\equiv_m\DP(G,S_2)$. We can therefore talk about \textit{the} domino problem on $G$, denoted $\DP(G)$. The problem also satisfies many inheritance properties: both the domino problem of a finitely generated subgroup and the domino problem of a quotient by a finitely generated kernel many-one reduce to the domino problem of the group. Furthermore, the word problem of the group many-one reduces to the complement of the domino problem. Proofs of these facts can be found in~\cite{aubrun2018domino}.

A particularly important property enjoyed by the problem, is that it can be expressed in the monadic second order (MSO) logic of the group's Cayley graph \cite{ballier2008tilings}. Coupled with the fact that virtually free groups have decidable MSO logic \cite{muller1985ends,kuske2005logical}, this implies that the domino problem is decidable for virtually free groups. It is possible to go even further, this characterization of virtually free groups tells us that if a group is not virtually free its Cayley graphs contain arbitrarily large grids as minors \cite{robertson1986minor}. This prompted Ballier and Stein \cite{ballier2018domino} to state the following conjecture.

\begin{conjecture}[The Domino Conjecture]
	\label{conj:domino}
	Let $G$ be a finitely generated group. Then $\DP(G)$ is decidable if and only if $G$ is virtually free.
\end{conjecture}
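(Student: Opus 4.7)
The plan is to split the biconditional into its two directions and handle them by very different methods. The forward direction---that virtually free groups have decidable domino problem---I would obtain via the MSO route already alluded to in the excerpt: first express emptiness of a nearest-neighbor $G$-SFT as an MSO sentence over the Cayley graph of $G$ (one existential set quantifier per letter of $A$ to partition the vertex set, followed by universal quantifiers and the edge relations of the generators to enforce the nearest-neighbor constraints), and then invoke the Muller--Schupp and Kuske--Lohrey theorems giving decidable MSO theory for virtually free groups. No new construction is needed here, and this is the only half of the conjecture currently proved.

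The converse direction---that decidability of $\DP(G)$ forces $G$ to be virtually free---is where the genuine work lies, and it is precisely why the statement remains a conjecture. Contrapositively, I would attempt a computable many-one reduction $\DP(\Z^2)\leq_m \DP(G)$ for every $G$ that is not virtually free. The structural input available is the theorem that such a Cayley graph contains arbitrarily large grids as minors, obtained by combining the Muller--Schupp tree-width characterization of virtually free groups with the grid-minor theorem of Robertson--Seymour. The strategy would then be to promote minor-containment to a tileset-level embedding: given a $\Z^2$-tileset $\tau$ encoding an undecidable instance, build a $G$-tileset $\tau'$ whose local rules coerce any valid $G$-configuration to carry $\tau$-valid patterns along grid minors of unbounded size, so that $\tau'$ is satisfiable on $G$ if and only if $\tau$ is satisfiable on $\Z^2$.

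The hard part will be exactly the step from minor-containment to a computable, uniform simulation. Grid minors may appear at unrelated locations in the Cayley graph, with different orientations, different realizations of minor-edges as actual group paths, and no global coordinate system that a finite set of local rules can lock onto. A single finite tileset must synchronize with every grid minor at every scale simultaneously, and naive Berger-style constructions do not adapt to this setting. A realistic attack would proceed in two stages: first, use the inheritance properties of Section 3 together with the subgroup and quotient reductions of \cite{aubrun2018domino} to reduce the problem to a short list of ``minimal'' non-virtually-free witnesses (for example $\Z^2$, free products involving $\Z^2$, and suitable one-relator or Baumslag--Solitar-like groups); then build tileset simulations tailored to each witness, exploiting whatever weak planar structure its Cayley graph retains. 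A secondary obstruction is that some candidate groups admit no quasi-isometric copy of $\Z^2$ at all, so the reduction must genuinely exploit minor-level rather than subgraph-level structure, which is the core technical barrier the field has not yet overcome.
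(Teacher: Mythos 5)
The statement you are asked to prove is labelled a conjecture in the paper, and the paper offers no proof of it; it is attributed to Ballier and Stein and remains open. Your forward direction is sound and is exactly the route the paper cites: expressibility of SFT emptiness in MSO over the Cayley graph, combined with the decidability of MSO for virtually free groups (Muller--Schupp, Kuske--Lohrey). That half is a genuine, complete argument.

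The converse direction, however, is where your proposal stops being a proof and becomes a research programme, as you yourself acknowledge. Concretely, the gap is the step you flag: passing from the existence of arbitrarily large grid minors in the Cayley graph to a \emph{computable, uniform} reduction $\DP(\Z^2)\leq_m\DP(G)$. Nothing in your sketch supplies the finite tileset that synchronizes with grid minors appearing at unknown locations, scales and orientations, and no such construction is known in general --- this is precisely why the statement is still a conjecture despite being verified for Baumslag--Solitar groups, polycyclic groups, hyperbolic groups, Artin groups and products of infinite groups, each by arguments tailored to the specific structure of those classes rather than by a uniform minor-to-simulation argument. Your proposed two-stage reduction to a ``short list of minimal non-virtually-free witnesses'' also has no known justification: there is no classification of non-virtually-free groups into finitely many types each containing a tractable witness as a subgroup or quotient. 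So the backward implication is a genuine missing idea, not a fillable technical step, and the proposal should not be presented as a proof of the biconditional.
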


Since then, many classes of groups have been shown to satisfy the conjecture, such as Baumslag-Solitar groups \cite{aubrun2013baumslag}, polycyclic groups \cite{jeandel2015aperiodic}, hyperbolic groups \cite{bartholdi2023domino}, Artin groups \cite{aubrun2023gbs}, direct products of two infinite groups \cite{jeandel2015translation}, among others.

\subsection{Seeded Domino Problem}

Perhaps the most natural variant of the domino problem is its seeded version. In fact, it was introduced simultaneously to the original problem \cite{wang1961proving} and, as previously mentioned, was shown to be undecidable on $\Z^2$ before the domino problem \cite{buchi1962turing,kahr1962entscheidungsproblem}.

\begin{definition}
	Let $G$ be a finitely generated group and $S$ a finite generating set. The \emph{seeded domino problem} on $G$ with respect to $S$ is the decision problem that, given an alphabet $A$, a finite set of nearest neighbor forbidden patterns $\Fo$ and a letter $a_0\in A$, determines if there exists $x\in X_{\Fo}$ such that $x(1_G) = a_0$. We denote the decision problem by $\SDP(G,S)$.
\end{definition}

As its definition suggests, this problem is computationally harder than the unseeded version: for a set of nearest neighbor forbidden patterns $\Fo$ over the alphabet $A$, we create an instance of the seeded domino problem per letter. 

\begin{lemma}
\label{lem:dp_to_sdp}
	Let $G$ be a finitely generated group and $S$ a finite generating set. Then, $\DP(G,S)\leq_p\SDP(G,S)$.
\end{lemma}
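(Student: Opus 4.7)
The plan is to exploit the fact that the seeded problem lets us fix the letter at the identity, whereas the plain domino problem asks for any nonempty configuration. The key observation is the tautology
\[
X_{\Fo} \neq \emptyset \ \Longleftrightarrow\ \exists\, a \in A\ \exists\, x \in X_{\Fo}\ \text{with}\ x(1_G) = a,
\]
which naturally turns one $\DP$ instance into a finite disjunction of $\SDP$ instances, indexed by the letters of the alphabet. This disjunctive structure is exactly what the definition of positive reduction accommodates.

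Concretely, given an input $(A, \Fo)$ of $\DP(G,S)$, I would compute, for each $a \in A$, the singleton set
\[
F_a \;=\; \bigl\{\, (A, \Fo, a) \,\bigr\}.
\]
There are $|A|$ such sets, all obtainable from $(A,\Fo)$ by a trivial computable procedure, so the map $(A,\Fo) \mapsto \{F_a\}_{a \in A}$ satisfies the finiteness and computability requirements of $\leq_p$. I then need to verify that $(A,\Fo)$ is a positive instance of $\DP(G,S)$ if and only if some $F_a$ is entirely contained in $\SDP(G,S)$.

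For the forward direction, if $X_\Fo$ contains a configuration $x$, setting $a := x(1_G)$ yields a witness that $(A, \Fo, a) \in \SDP(G,S)$, so $F_a \subseteq \SDP(G,S)$. Conversely, if $F_a \subseteq \SDP(G,S)$ for some $a$, the corresponding seeded witness is in particular an element of $X_\Fo$, so $X_\Fo$ is nonempty.

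There is no real obstacle here: the lemma is essentially an unpacking of definitions, and the only subtlety is recognizing that the disjunction over $a \in A$ cannot in general be collapsed to a single many-one reduction (since we have no canonical symbol to seed with), which is exactly why the statement is phrased in terms of positive rather than many-one reducibility.
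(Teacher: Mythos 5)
Your proposal is correct and matches the paper's (only sketched) argument exactly: one seeded instance per letter of the alphabet, with $X_{\Fo}\neq\emptyset$ if and only if some letter can be seeded at the identity, packaged as a positive reduction via singleton sets.
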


Just as the domino problem, there is a behavioral jump from the one-dimensional case to the two-dimensional case. Using the fact that nearest neighbor $\Z$-SFTs are defined as bi-infinite walks on a finite graph, $\SDP(\Z, \{t\})$ is decidable. This difference in computability prompts the study of this problem on finitely generated groups. In fact, the problem can be shown to be decidable on the entire class of virtually free groups. Just as the domino problem, the seeded version can be expressed in monadic second order logic (see \cite{bartholdi2022mso}), making the problem decidable in this class.

\subsection{Recurring Domino Problem}


The recurring domino problem was originally introduced by Harel as a natural decision problem that is highly undecidable, in order to find other highly undecidable problems \cite{harel1985recurring}. He showed that in $\Z^2$ the problem is not only undecidable, but it is beyond the arithmetical hierarchy: it is $\Sigma_1^1$-complete \cite{Harel_1986}. We expand the problem's definition to finitely generated groups.

\begin{definition}
	Let $G$ be a finitely generated group and $S$ a finite generating set. The \emph{recurrent domino problem} on $G$ with respect to $S$ is the decision problem that, given an alphabet $A$, a finite set of nearest neighbor forbidden patterns $\Fo$ and a letter $a_0\in A$, determines if there exists $x\in X_{\Fo}$ such that the set $\{g\in G \colon\ x(g) = a_0\}$ is infinite. We denote the decision problem by $\RDP(G,S)$.
\end{definition}

As was the case with the seeded variant, this problem is computationally harder that the standard domino problem.

\begin{lemma}
\label{lem:dp_to_rdp}
	Let $G$ be a finitely generated group and $S$ a finite generating set. Then, $\DP(G,S)\leq_p\RDP(G,S)$.
\end{lemma}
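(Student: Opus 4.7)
The plan is to mimic the positive reduction that one would use to prove Lemma \ref{lem:dp_to_sdp}. Given an instance $(A,\Fo)$ of $\DP(G,S)$, I would produce, for each letter $a\in A$, the singleton set
\[
F_a = \{(A,\Fo,a)\},
\]
each element of which is an instance of $\RDP(G,S)$. The claim is then that $X_{\Fo}\neq\emptyset$ if and only if there exists $a\in A$ with $F_a\subseteq \RDP(G,S)$, which is exactly the form required by the definition of $\leq_p$. Note that this collection of sets is clearly computable from $(A,\Fo)$ since $A$ is given as part of the input.

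The nontrivial direction to verify is: if $X_{\Fo}\neq\emptyset$ then some $F_a$ is contained in $\RDP(G,S)$. For this I would use the standing hypothesis from the preliminaries that $G$ is infinite. Fix any configuration $x\in X_{\Fo}$. Since $G=\bigsqcup_{a\in A}\{g\in G:x(g)=a\}$ is a partition of an infinite set into finitely many pieces, the pigeonhole principle gives at least one $a_0\in A$ such that $\{g\in G:x(g)=a_0\}$ is infinite. This configuration $x$ then witnesses $(A,\Fo,a_0)\in \RDP(G,S)$, so $F_{a_0}\subseteq \RDP(G,S)$. Conversely, if some $F_a\subseteq \RDP(G,S)$, then in particular $X_{\Fo}$ contains a configuration realizing $a$ infinitely often, hence $X_{\Fo}\neq\emptyset$ and $(A,\Fo)\in \DP(G,S)$.

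There is essentially no obstacle here beyond invoking the infiniteness of $G$; the reduction is positive rather than many-one precisely because we do not know in advance \emph{which} letter will be forced to appear infinitely often, so we must hand $\RDP(G,S)$ the full finite list of candidate seed letters and accept iff at least one is accepted. This is entirely analogous to the proof of Lemma \ref{lem:dp_to_sdp}, and in fact the same reduction function works for both statements simultaneously.
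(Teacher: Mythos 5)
Your proposal is correct and matches the paper's proof: both create one instance of $\RDP(G,S)$ per letter of $A$ and use the infiniteness of $G$ (via pigeonhole) to conclude that a nonempty $X_{\Fo}$ forces some letter to recur infinitely often. You are merely more explicit about packaging the instances as the singleton sets $F_a$ required by the definition of positive reducibility.
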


\begin{proof}
	Let $\Fo$ be a set of nearest neighbor patterns for $\DP(G,S)$. Notice that this is already part of the input for the recurring version, we simply create an instance for $\RDP(G,S)$ for each of the letters of the alphabet. Because $G$ is infinite, if the subshift defined by $\Fo$ is non-empty, then at least one letter is forced to repeat itself infinitely often.
\end{proof}
Nevertheless, the behavioral jump that occurs between $\Z$ and $\Z^2$ for the original problem is still present.

\begin{proposition}
	$\RDP(\Z,\{t\})$ is decidable. 
\end{proposition}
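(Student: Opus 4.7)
The plan is to exploit the characterization of nearest-neighbor $\Z$-SFTs as bi-infinite walks on the tileset graph $\Gamma_{\Fo}$ (with $S = \{t\}$), mentioned at the end of the Preliminaries: a configuration $x\in A^{\Z}$ belongs to $X_{\Fo}$ if and only if the sequence $(x(n))_{n\in\Z}$ traces a bi-infinite directed walk in $\Gamma_{\Fo}$. Under this identification, the instance $(A,\Fo,a_0)$ is a \emph{yes}-instance of $\RDP(\Z,\{t\})$ exactly when $\Gamma_{\Fo}$ admits a bi-infinite walk visiting the vertex $a_0$ infinitely often.

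First, I would reduce the problem to a purely graph-theoretic question by proving the following equivalence: such a walk exists in $\Gamma_{\Fo}$ if and only if $a_0$ lies on a directed cycle of $\Gamma_{\Fo}$. The forward implication is the pigeonhole argument applied to the finite vertex set $A$: if $a_0$ appears at infinitely many positions, then there exist indices $m < n$ with $x(m) = x(n) = a_0$, so the finite walk $x(m),x(m+1),\ldots,x(n)$ is a closed walk from $a_0$ to itself of positive length, which contains a directed cycle through $a_0$. For the converse, given any directed cycle $a_0 = v_0 \to v_1 \to \cdots \to v_{k-1} \to v_0$ through $a_0$, set $x(n) = v_{n \bmod k}$; this is a periodic bi-infinite walk in $\Gamma_{\Fo}$ that hits $a_0$ at every multiple of $k$, hence infinitely often.

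It then suffices to observe that deciding whether a distinguished vertex of a finite directed graph lies on a directed cycle is a standard decidable (indeed polynomial-time) task: one can, for instance, compute the strongly connected components of $\Gamma_{\Fo}$ and test whether the component containing $a_0$ contains at least one edge, or equivalently check via a BFS/DFS whether $a_0$ is reachable from some out-neighbor of itself. Combined with the equivalence above, this yields an algorithm for $\RDP(\Z,\{t\})$: construct $\Gamma_{\Fo}$ from the input $(A,\Fo)$ and decide whether $a_0$ lies on a directed cycle.

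I do not expect any genuine obstacle here; the content lies in recognizing that infinite recurrence of $a_0$ collapses, via finiteness of $A$, to the existence of a single cycle through $a_0$. The only point requiring a little care is ensuring that the cycle-based periodic configuration is truly bi-infinite (both directions follow automatically from periodicity), so no further reachability analysis from the left or from the right is needed.
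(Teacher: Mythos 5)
Your proof is correct and follows essentially the same route as the paper: both identify configurations with bi-infinite walks on the tileset graph $\Gamma_{\Fo}$, reduce infinite recurrence of $a_0$ to the existence of a directed cycle through $a_0$ (pigeonhole in one direction, a periodic configuration in the other), and conclude by the decidability of cycle detection in a finite graph. No meaningful differences.
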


\begin{proof}
	Let $\Fo$ be a finite set of nearest neighbor forbidden patterns and $a_0\in A$. Recall that we can define a graph $\Gamma_{\Fo}$, that is effectively constructible from $\Fo$, such that configurations on $X_{\Fo}$ correspond exactly with bi-infinite walks on $\Gamma_{\Fo}$. Therefore, to decide our problem we simply have to search for a simple cycle on $\Gamma_{\Fo}$ that is based at $a_0$. If there is such a cycle, $c = a_0 a_1 a_2 ... a_n a_0$ with $a_i\in A$, we define the periodic configuration $x = (a_0a_1 a_2 a_3 ... a_n)^{\infty}\in X_{\Fo}$. If, on the other hand, there exists a configuration $y\in X_{\Fo}$ on which $a_0$ appears infinitely often; take two consecutive occurrences of $a_0$, say $y(k) = y(k') = a_0$ with $k<k'$. Then, because configurations correspond to bi-infinite walks, there is a cycle on $\Gamma_{\Fo}$ given by $c' = a_0 y(k+1) y(k+2) \ ... \ y(k'- 1)a_0$. As searching for simple cycles on a finite graph is computable, our problem is decidable.
\end{proof}

\section{Properties for Seeded and Recurring Variants}

\subsection{General Inheritance Properties}

Let us try and recover some inheritance properties enjoyed by the standard domino problem for the two variants, starting by the invariance under changing generating sets. We use strategies and procedures used to prove the corresponding results for the normal problem, as done in~\cite{aubrun2018domino}. We begin by making use of pattern codings, which are a computationally tractable way of defining forbidden patterns whose support is not $\{1_G, s\}$.

\begin{definition}
	Let $G$ be a f.g. group, $S$ a finite set of generators and $A$ a finite alphabet. A pattern coding $c$ is a finite set of tuples $c = \{(w_i, a_i)\}_{i\in I}$, where $w_i\in(S\cup S^{-1})^*$ and $a_i\in A$.
\end{definition}

Given a set of pattern codings $\Col$, we define its corresponding subshift as:

$$X_{\Col} = A^{G}\setminus \bigcup_{g\in G}\bigcap_{\substack{c\in\Col \\ (w,a)\in c}}[a]_{gw}.$$

\begin{definition}
	The seeded (recurrent) emptiness problem on $G$ with respect to $S$ asks if, given $\Col$ a set of pattern codings and $a_0\in A$, there exists $x\in X_{\Col}$ such that $x(1_G) = a_0$ (resp. $a_0$ appears infinitely often in $x$, that is, $|\{g\in G \colon\ x(g) = a_0\}|$ is infinite).
\end{definition}

Let us denote this problem by $\SEP(G,S)$ (resp. $\REP(G,S)$).

\begin{lemma}
	\label{cambiogen}
	Let $G$ be a f.g. group along with two finite generating sets $S_1$ and $S_2$. Then,
	\begin{itemize}
		\item $\SDP(G, S_1) \equiv_p \SDP(G, S_2)$,
		\item $\RDP(G, S_1) \equiv_p \RDP(G, S_2)$.
	\end{itemize}
\end{lemma}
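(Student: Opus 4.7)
The approach is to mimic the strategy used for $\DP$ in~\cite{aubrun2018domino}, routing both equivalences through the emptiness problems with pattern codings ($\SEP$, $\REP$) introduced just above. Concretely, for each fixed generating set $S$ I would prove $\SDP(G, S) \equiv_p \SEP(G, S)$ and $\RDP(G, S) \equiv_p \REP(G, S)$, and separately $\SEP(G, S_1) \equiv_m \SEP(G, S_2)$ and $\REP(G, S_1) \equiv_m \REP(G, S_2)$. Chaining these four equivalences yields the lemma.

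For the first pair of equivalences, one direction is immediate since every nearest neighbor forbidden pattern is already a pattern coding, and the seed and recurrence conditions carry over unchanged. For the other direction, given a pattern coding $\Col$ whose longest word has length $n$, let $B_n \subseteq G$ be the (finite) ball of radius $n$ in the Cayley graph. I would work over the product alphabet $A' = A \times A^{B_n}$, whose second coordinate at a vertex $g$ records the $B_n$-pattern of the configuration centered at $g$. A finite set of nearest neighbor rules then enforces (i) consistency between neighboring records on their overlap, (ii) agreement of the first coordinate with the local record, and (iii) forbids any letter $(a, b) \in A'$ whose record $b$ realizes some $c \in \Col$; by shift-invariance, the latter excludes copies of $c$ everywhere. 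To recover the seed (resp.\ recurrence) of a letter $a_0$ in the first coordinate, take the disjunction over all letters of $A'$ whose first coordinate is $a_0$: this is a positive reduction in the required sense, using in the recurrent case that a finite union of subsets of $G$ is infinite if and only if some summand is.

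For the second pair of equivalences, fix a computable map $\phi : S_1 \cup S_1^{-1} \to (S_2 \cup S_2^{-1})^*$ with $\overline{\phi(s)} = s$, extended to a morphism on words. Given a pattern coding $\Col$ over $S_1$, substitute $\phi(w)$ for each $w$ in the tuples of $\Col$ to obtain a pattern coding $\phi(\Col)$ over $S_2$. Because $X_\Col$ depends on the words only through their $G$-values, we have $X_\Col = X_{\phi(\Col)}$ as subshifts of $A^G$; the seed constraint is anchored at $1_G$ and the recurrence condition concerns the set $\{g \in G : x(g) = a_0\}$, both independent of the generating set. This gives many-one reductions in both directions.

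The main obstacle is the product construction in the first step: writing down the finite set of nearest neighbor rules that correctly enforces local consistency of the $B_n$-records across the Cayley graph, and verifying that the resulting lifted subshift projects onto $X_\Col$ while preserving the seed and recurrence conditions. Everything else is a routine adaptation of the known arguments for $\DP$.
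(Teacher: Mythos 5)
Your proposal is correct and follows essentially the same route as the paper: it factors both equivalences through the pattern-coding emptiness problems, uses a higher-block/product alphabet with nearest-neighbor consistency rules for the positive reduction $\SEP(G,S)\leq_p\SDP(G,S)$ (the paper's alphabet $\hat{A}$ of colorings of $S^{\leq N}$ is the same device as your $A^{B_n}$ records), and handles the change of generating set by word substitution in the codings. The disjunction over all lifted letters projecting to $a_0$ is exactly the paper's construction of $|\mathcal{A}|$ instances, and your remark that a finite union is infinite iff some summand is infinite is precisely what makes the recurrent case go through.
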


\begin{proof}
	
	We begin by noticing that we can re-write a pattern coding into any other generating set. This means that $\SEP(G, S_1)\equiv_m\SEP(G, S_2)$. We therefore just need to show that $\SDP(G,S)\equiv_p\SEP(G,S)$.
	
	It is straight forward to re-write nearest neighbor patterns as pattern codings: each pattern $(a,b,s)$ becomes $c = \{(\varepsilon, a), (s, b)\}$. Thus, we have the reduction $\SDP(G,S)\leq_m\SEP(G,S)$. Let us now focus on proving that $\SEP(G,S)$ positive-reduces to $\SDP(G, S)$. Given a set of pattern codings $\Col$ and a letter $a_0$, we can compute both
	$$ N = \max_{c\in\Col}\max_{(w,a)\in c}|w|,$$
	and a new alphabet, $\hat{A}$, consisting of colorings of words of length at most $N$ with no pattern from $\Col$:
	$$\hat{A} = \left\{\phi: S^{\leq N}\to A \ \colon\ \forall c\in\Col, \exists(w,a)\in c,\ \phi(w)\neq a\right\}.$$
	
	In addition, we are able to compute a set of forbidden patterns over $\hat{A}$ denoted by $\Fo'$ such that:
	$$q\in\Fo' \iff q\in \hat{A}^{\{1,s\}}: \ \exists w\in S^{\leq N-1}, \ q_1(sw)\neq q_{s}(w).$$
	Finally, we compute the set of all functions $\phi\in\hat{A}$ such that $\phi(\varepsilon) = a_0$, and denote this set by $\mathcal{A}$. We create $|\mathcal{A}|$ sets of inputs for $\SDP(G,S)$ given by the forbidden patterns $\Fo'$ and a target letter $\phi\in\mathcal{A}$. \\
	
	If there exists a configuration $x\in X_{\Col}$ such that $x(1_G) = a_0$, we define $y\in X_{\Fo'}\subseteq \hat{A}^G$ by $y(g)(w) = x(g\bar{w})$. This way, $y$ contains no pattern from $\Fo'$, as $x$ does not contain a pattern coding from $\Col$, and $y(1_G)(\varepsilon) = x(1_G) = a_0$. Conversely, if there is function $\phi\in\mathcal{A}$ and a configuration $y\in X_{\Fo'}$ such that $y(1_G) = \phi$, we construct $x\in X_{\Col}$ by $x(g) = y(g)(\varepsilon)$. From the definition of $\Fo'$, if we take $g\in G$ with $|g|_S\leq N$ and a word $w\in S^{\leq N}$ such that $\bar{w} = g$, we have that $y(1_G)(w) = y(g)(\varepsilon)$. Therefore, $x$ is well-defined and contains no pattern codings from $\Col$. Furthermore, $x(1_G) = y(1_G)(\varepsilon) = a_0$.
	
	All the previous arguments are analogous for the case of $\RDP(G,S)$ and $\REP(G,S)$.
\end{proof}

This Lemma allows us to talk about \emph{the} seeded domino problem on $G$, $\SDP(G)$, and \emph{the} recurring domino problem on $G$, $\RDP(G)$.

\begin{lemma}
	\label{lem:subgrupo}
	Let $G$ be a f.g. group along with a finitely generated subgroup $H$. Then,
	\begin{itemize}
		\item $\SDP(H) \leq_m \SDP(G)$,
		\item $\RDP(H) \leq_m \RDP(G)$.
	\end{itemize}
\end{lemma}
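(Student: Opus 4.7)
The plan is to use the natural lift of $H$-configurations to $G$-configurations via the left coset structure. I would fix a finite generating set $T$ of $H$ and extend it to a finite generating set $S = T \cup S_0$ of $G$; by Lemma~\ref{cambiogen} this choice is immaterial. The reduction sends any input $(A, \Fo, a_0)$ for the $H$-problem to the same triple viewed as an input for the $G$-problem: since $T \subseteq S$, the nearest-neighbor patterns $(a, b, t) \in \Fo$ with $t \in T$ remain nearest-neighbor over $S$. This map is the identity on inputs and is trivially computable.

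The crux will be a two-way correspondence between $X_\Fo^H$ and $X_\Fo^G$. I would fix a left transversal $(g_i)_{i \in I}$ of $H$ in $G$ with $g_0 = 1_G$, so every $g \in G$ writes uniquely as $g_i h$ with $h \in H$. Given $y \in X_\Fo^H$, the lift $x(g_i h) := y(h)$ lies in $X_\Fo^G$: for any $t \in T$ and $g = g_i h$, we have $gt = g_i(ht)$ in the same coset $g_i H$, so $(x(g), x(gt), t) = (y(h), y(ht), t) \notin \Fo$. Conversely, for any $x \in X_\Fo^G$ and any coset index $i$, the restriction $y_i(h) := x(g_i h)$ belongs to $X_\Fo^H$ by the same computation on $T$-edges. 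For $\SDP$, this yields immediately the equivalence $y(1_H) = a_0 \Leftrightarrow x(1_G) = a_0$ (using $y_0 = x|_H$ for the backward direction), proving $\SDP(H) \leq_m \SDP(G)$.

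For $\RDP$, the forward direction will be immediate: if $y^{-1}(a_0)$ is infinite, the lifted $x$ has infinitely many $a_0$'s on the identity coset alone. For the backward direction, given $x \in X_\Fo^G$ with $x^{-1}(a_0)$ infinite, I would decompose $x^{-1}(a_0) = \bigsqcup_i(g_i H \cap x^{-1}(a_0))$: if some coset contributes infinitely, the corresponding restriction $y_i$ is the desired $H$-configuration. The main obstacle I anticipate, which arises only when $[G : H] = \infty$, is the cheating scenario where every coset contributes finitely but infinitely many contribute at least one $a_0$. The plan here is a compactness argument in the closed subshift $X_\Fo^H \subseteq A^H$: from shifted restrictions $(h_n^{-1} \cdot y_{i_n})_n$ chosen so that each places an $a_0$ at the identity, extract an accumulation point $y^* \in X_\Fo^H$ with $y^*(1_H) = a_0$. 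Promoting this from one $a_0$-position to infinitely many will be the delicate step; if bare compactness does not suffice, augmenting the reduction with pattern-coding constraints in the spirit of Lemma~\ref{cambiogen} to tag the coset of a distinguished $a_0$-occurrence should eliminate the multi-coset distribution and complete the reduction.
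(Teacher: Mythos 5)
Your treatment of $\SDP$ is exactly the paper's argument: the instance is passed through unchanged, the backward direction takes $y=x|_H$, and the forward direction copies $y$ onto every left coset of $H$; that part is correct and complete. (A minor point you share with the paper: since Lemma~\ref{cambiogen} only gives positive equivalence under change of generating set, routing through $S=T\cup S_0$ strictly yields a positive reduction to $\SDP(G,S_G)$ rather than a many-one one, but this is cosmetic.)

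For $\RDP$ you have put your finger on a genuine gap --- one the paper itself dismisses with ``the recurring one is analogous'' --- but neither of your proposed repairs closes it, and in fact the identity-on-instances map is \emph{not} a many-one reduction when $[G:H]=\infty$. Concretely, take $H=\langle a\rangle\cong\Z$ inside $G=\F_2=\langle a,b\rangle$, alphabet $\{0,1,2\}$, target letter $1$, and forbidden patterns in the $a$-direction whose allowed transitions are exactly $0\to0$, $0\to1$, $1\to2$, $2\to2$ (nothing forbidden in the $b$-direction). On $\Z$ every valid configuration has the form $\dots0001222\dots$, $\dots000\dots$ or $\dots222\dots$, so it contains at most one occurrence of $1$ and the $H$-instance is a ``no''; but on $\F_2$ the configuration writing $\dots0001222\dots$ along each left coset of $\langle a\rangle$, with the $1$ at the coset representative, is valid and contains infinitely many $1$'s, so the $G$-instance is a ``yes''. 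This example also shows why your two patches fail: the compactness limit $y^*$ is precisely $\dots0001222\dots$, which carries a single $a_0$ and cannot be promoted; and any ``tagging'' implemented by (pattern-coding) forbidden patterns defines a translation-invariant closed condition, so if one tagged coset is permitted then a limit of translates permits infinitely many tagged cosets and the spurious configurations reappear. What does survive is the finite-index case, where pigeonhole forces a single coset to carry infinitely many occurrences --- which is all that Lemma~\ref{lem:fi} actually needs --- but for infinite index a genuinely different construction is required, and as written your proposal (like the paper's one-line claim) does not establish $\RDP(H)\leq_m\RDP(G)$.
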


\begin{proof}
	Let $S_H$ and $S_G$ be finite sets of generators for $H$ and $G$ respectively. We will work with the seeded version, as the recurring one is analogous. Notice that an instance, $(\Fo, a_0)$, of $\SDP(H, S_H)$ is also an instance of $\SDP(G, S_G\cup S_H)$. 
	
	Now, if there exists $x\in X_{\Fo}\subseteq A^G$ with $x(1_G) = a_0$, then the configuration $y = x|_H\in A^H$ contains no patterns from $\Fo$ and verifies $y(1_H) = a_0$. On the other hand, if there exists $y\in X_\Fo\subseteq A^H$; let $L$ be a set of left representatives for $G/H$. We define $x\in A^G$ as $x(lh) = y(h)$ for all $l\in L$ and all $h\in H$. Because the forbidden patterns are supported on $S_H$, we have that $x\in X_\Fo\subseteq A^G$.
\end{proof}

\begin{lemma}
\label{lem:fi}
	Let $G$ be a f.g. group along with a subgroup $H$ such that $[G:H]<\infty$. Then,
	\begin{itemize}
		\item $\SDP(G) \equiv_p \SDP(H)$,
		\item  $\RDP(G) \equiv_p \RDP(H)$.
	\end{itemize}
\end{lemma}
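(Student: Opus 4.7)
The $\leq_p$ direction of both equivalences (from $\SDP(H)$ to $\SDP(G)$, and likewise for $\RDP$) follows immediately from Lemma \ref{lem:subgrupo}, since many-one reduction implies positive reduction. So the real content is to reduce the problems on $G$ to the ones on $H$. My plan is the standard block recoding via coset representatives, generalized to the seeded and recurring setting through Lemma \ref{cambiogen}.

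Since $[G:H]<\infty$, fix a transversal $L=\{l_1,\ldots,l_n\}$ of left cosets with $l_1 = 1_G$; note $H$ is finitely generated by Schreier's lemma, and fix generating sets $S_G$ of $G$ and $S_H$ of $H$. Every $g\in G$ decomposes uniquely as $g=h\cdot l_i$ with $h\in H$, $l_i\in L$. For each $l_i\in L$ and $s\in S_G\cup S_G^{-1}$ write
\[ l_i\, s \;=\; \eta(l_i,s)\cdot \sigma(l_i,s),\qquad \eta(l_i,s)\in H,\ \sigma(l_i,s)\in L, \]
and fix, for each such $\eta(l_i,s)$, a word $w_{i,s}\in(S_H\cup S_H^{-1})^*$ representing it. Using the alphabet $\hat A = A^{L}$, configurations $x\in A^G$ are in bijection with configurations $y\in\hat A^H$ via $y(h)(l_i)=x(h l_i)$. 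A nearest-neighbor forbidden pattern $(a,b,s)\in\Fo$ on $x$ is then equivalent to ruling out, at every $h\in H$, every pair $(\phi,\psi)\in\hat A^2$ with $\phi(l_i)=a$ and $\psi(\sigma(l_i,s))=b$ in positions $h$ and $hw_{i,s}$. This gives a finite set $\Col$ of pattern codings over $(H,S_H)$ effectively computable from $\Fo$, with $x\in X_\Fo$ iff $y\in X_\Col$.

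For the seeded case, the constraint $x(1_G)=a_0$ is $y(1_H)(l_1)=a_0$, which is the finite disjunction over all $\phi\in\hat A$ with $\phi(l_1)=a_0$ of the conditions $y(1_H)=\phi$. Thus we produce one instance of $\SEP(H,S_H)$ per such $\phi$, and by Lemma \ref{cambiogen} conclude $\SDP(G)\leq_p\SEP(H)\equiv_p\SDP(H)$. For the recurring case, because $L$ and $\hat A$ are finite, two nested pigeonhole arguments show that $\{g\in G : x(g)=a_0\}$ is infinite if and only if there exist $l_i\in L$ and $\phi\in\hat A$ with $\phi(l_i)=a_0$ such that $\{h\in H : y(h)=\phi\}$ is infinite. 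So we produce one instance of $\REP(H,S_H)$ per pair $(l_i,\phi)$ with $\phi(l_i)=a_0$, and Lemma \ref{cambiogen} yields $\RDP(G)\leq_p\RDP(H)$.

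The only delicate step is checking that the forbidden-pattern translation $\Fo\mapsto\Col$ really captures the coset action correctly, which is a bookkeeping exercise once the decomposition $g=h l_i$ is fixed. The need for a positive rather than many-one reduction is intrinsic: the seeded letter $a_0$ corresponds to a set of super-letters $\phi$, and in the recurring case pigeonhole forces an \emph{or} over the finite families of choices of coset $l_i$ and extending super-letter $\phi$, so neither reduction collapses to a single $\SDP(H)$ or $\RDP(H)$ instance.
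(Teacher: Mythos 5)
Your proof is correct and rests on the same core idea as the paper's: recode configurations of $A^G$ as configurations of $(A^{R})^{H}$ over a transversal of $H$ (a higher-power shift), and issue one seeded/recurring instance per super-letter extending $a_0$, which is exactly why the reduction is positive rather than many-one. The execution differs in one structural point. The paper first replaces $H$ by a normal finite-index subgroup $N$ (using Lemma~\ref{lem:subgrupo} to pass back from $N$ to $H$) precisely so that it can exhibit an explicit finite generating set $T=RDR^{-1}$ of $N$ for which the recoded forbidden patterns are genuinely nearest neighbour, yielding a direct reduction to $\SDP(N,T)$. You avoid normality entirely: your Schreier rewriting $l_i s=\eta(l_i,s)\sigma(l_i,s)$ produces forbidden \emph{pattern codings} over $(H,S_H)$, and you then invoke the $\SEP\equiv_p\SDP$ (resp.\ $\REP\equiv_p\RDP$) equivalence from the proof of Lemma~\ref{cambiogen} to return to nearest-neighbour instances. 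Both routes are sound (positive reductions compose); yours trades the normal-core step for an extra application of Lemma~\ref{cambiogen}, which is arguably cleaner. You also spell out the double pigeonhole over $L$ and $\hat A$ for the recurring case, including the fact that $a_0$ may recur in a coset other than $Hl_1$ so one must range over all positions $l_i$ of the super-letter --- a point the paper compresses into ``because $|R|<+\infty$, the case for $\RDP$ is analogous.''
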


\begin{proof}
	Because finite index subgroups of finitely generated groups are finitely generated, $\SDP(H) \leq_m \SDP(G)$ by Lemma \ref{lem:subgrupo}. We now prove that $\SDP(H)\leq_p \SDP(G)$. Without loss of generality, we may assume $H\trianglelefteq G$: every finite index subgroup $H$ contains a normal finite index subgroup $N$, then if we prove $\SDP(G)$ reduces to $\SDP(N)$, we can conclude it reduces to $\SDP(H)$ by Lemma \ref{lem:subgrupo}. \\

	Let $X\subseteq A^G$ be a subshift, and $R$ a set of right co-set representatives for $G/H$, containing the identity $1_G$. We define what is known as the $R$-higher power shift of $X$ as: 
	$$ X^{[R]} = \{y\in (A^R)^H \ \colon \ \exists x\in X: \ \forall(h,r)\in H\times R, \ y(h)(r) = x(hr) \}.$$
	
	It is clear that $X^{[R]}$ is an $H$-subshift, and we will show that if $X$ is a $G$-SFT, then $X^{[R]}$ is a $H$-SFT. Let $S_H$ be a finite set of generators for $H$. We define the sets $D = S_H \cup (RRR^{-1}\cap H)$ and $T = RDR^{-1}$. Because $1_G\in R$ and $H$ is a normal subgroup, $H = \langle T\rangle$.
	
	We will positive-reduce $\SDP(G, S_H\cup R)$ to $\SDP(H,T)$. Let $(\Fo, a_0)$ be an instance of $\SDP(G, S_H\cup R)$. Let us construct a set $\Fo'$ of forbidden patterns over the alphabet $A^R$, such that $X_{\Fo'} = X_{\Fo}^{[R]}$. We begin by defining the set of $R$-patterns containing $a_0$:
	$$\mathcal{A} = \{p\in A^R \ \colon \ p(1_G) = a_0\}.$$
	
	Now, take $(a,b,s)\in\Fo$. We will add patterns to $\Fo'$ depending on where $s$ belongs.
	\begin{itemize}
		\item If $s\in S_H$, we add for each $r\in R$ a pattern $q$ of support $\{1_H, rsr^{-1}\}$ such that $q(1_H)(r) = a$ and $q(rsr^{-1})(r) = b$.

		\item If $s\in R$, notice that for any $r\in R$, we have $rs = hr'$ where $r'\in R$ and $h\in RRR^{-1}\cap H$, as $R$ is a set of right coset representatives. Therefore, for each $r\in R$, $h$ and $r'$ as befor, we add the pattern $q$ of support $\{1_H, h \}$ such that $q(1_H)(r) = a$ and $q(h)(r') = b$. 
	\end{itemize}
	
	A straightforward computation shows $X_{\Fo'} = X_{\Fo}^{[R]}$. Finally, we create $|\mathcal{A}|$ inputs for $\SDP(H,T)$ given by $\Fo'$ and a letter from $\mathcal{A}$. Suppose there exists $x\in X_{\Fo}$ such that $x(1_G) = a_0$. Define $y\in X_{\Fo}^{[R]}$ as $y(h)(r) = x(hr)$ for all $h\in H$, $r\in R$, which implies $y(1_H) = x|_{R}\in\mathcal{A}$. Conversely, if there exists $y\in X^{[R]}_{\Fo}$ such that $y(1_H)\in \mathcal{A}$, define $x\in X_{\Fo}$ by $x(hr) = y(h)(r)$ for all $h\in H$ and $r\in R$. Thus, $x(1_G) = y(1_H)(1_G) = a_0$.
	
	Because $|R|<+\infty$, the case for $\RDP$ is analogous.
\end{proof}

\subsection{Recurring Domino Problem on Free Groups}
\label{sec:free}

In this section we prove the following result:

\begin{theorem}
\label{thm:free}
	$\RDP(\F_n)$ is decidable.
\end{theorem}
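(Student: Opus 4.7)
The plan is to exploit that the Cayley graph of $\F_n$ with standard generating set $S = \{s_1, \ldots, s_n\}$ is the $2n$-regular tree, and to reduce the recurrent emptiness question to B\"uchi non-emptiness on a finite auxiliary graph. By Lemma \ref{cambiogen} we may fix $S$; setting $\hat{S} = S \cup S^{-1}$, a configuration $x \in X_\Fo$ corresponds to a labeling of this tree whose every labeled edge is consistent with $\Fo$ via the tileset graph $\Gamma_\Fo$ extended to the inverse generators.

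The first step is to compute, for each $a \in A$ and each $t \in \hat{S} \cup \{\bot\}$, the Boolean $V(a, t)$: true iff the one-sided subtree rooted at a vertex labeled $a$, with the direction $t$ excluded ($\bot$ meaning no exclusion, i.e.\ the full tree), admits a valid labeling extending the root label. The predicate $V$ is the greatest solution of the system
$$V(a, t)\ =\ \bigwedge_{s \in \hat{S} \setminus \{t\}}\ \bigvee_{b\,:\,(a, b, s) \notin \Fo} V(b, s^{-1}),$$
and as such is computable by downward iteration on the finite Boolean lattice.

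The second step is to build a finite directed graph $\mathcal{H}$: its vertices are the pairs $(a, t)$ with $V(a, t)$ true, and there is an edge $(a, t) \to (b, s^{-1})$ whenever $s \in \hat{S} \setminus \{t\}$, $(a, b, s) \notin \Fo$, and every remaining direction $s' \in \hat{S} \setminus \{t, s\}$ admits some label $b'$ with $(a, b', s') \notin \Fo$ and $V(b', s'^{-1})$. Declaring the vertices of the form $(a_0, \cdot)$ accepting, the algorithm outputs YES iff $\mathcal{H}$ contains an infinite path from some vertex $(a, \bot)$ that visits accepting vertices infinitely often; this is B\"uchi non-emptiness on a finite graph, decidable in polynomial time via strongly connected component analysis.

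The main obstacle is correctness, specifically that any $x \in X_\Fo$ with infinitely many occurrences of $a_0$ yields a B\"uchi-accepting run of $\mathcal{H}$. The naive ``always continue into a subtree containing infinitely many $a_0$s'' may produce a path along which $a_0$ never occurs, if the $a_0$s all sit in side-subtrees. The fix is a rerouting argument: the chosen infinite subtree always contains at least one $a_0$, so at each stage one takes a finite detour to the nearest occurrence; the residual subtree still holds infinitely many $a_0$s, so the construction iterates, producing an infinite path visiting $a_0$ infinitely often, which translates into an accepting run in $\mathcal{H}$. The converse is immediate: any accepting run determines the labels along a spine in the tree, and the edge conditions together with $V$ allow filling in each side subtree by a valid labeling.
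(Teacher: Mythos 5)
Your reduction to B\"uchi non-emptiness on $\mathcal{H}$ decides a different problem: whether some configuration of $X_{\Fo}$ contains infinitely many occurrences of $a_0$ \emph{along a single geodesic ray} (runs of $\mathcal{H}$ are non-backtracking, since you require $s\in\hat S\setminus\{t\}$). The soundness direction and the computation of $V$ are fine, but the completeness direction is genuinely false, and the rerouting argument cannot repair it: after detouring to the nearest occurrence of $a_0$ in the chosen subtree, the residual subtree (the directions other than the one you arrived from) may contain no further occurrences at all, because the remaining occurrences branch off the spine \emph{before} or \emph{at} the detour's branch point; reaching them would force the run to backtrack, which $\mathcal{H}$ forbids. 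Concretely, take $\F_2=\langle\mathtt a,\mathtt b\rangle$ and the alphabet $\{X,a_0\}\cup\{D_s : s\in\hat S\}$ with nearest-neighbour rules forcing: the $\mathtt a^{\pm1}$-neighbours of $X$ to be $X$, the $\mathtt b$-neighbour of $X$ to be $a_0$, the $\mathtt b^{-1}$-neighbour of $a_0$ to be $X$, every remaining neighbour of $X$ and of $a_0$ to be the dead letter $D_s$ pointing back toward it, and every neighbour of $D_s$ in a direction $s''\neq s$ to be $D_{(s'')^{-1}}$. Placing $X$ on $\langle\mathtt a\rangle$, $a_0$ on $\langle\mathtt a\rangle\mathtt b$ and dead letters elsewhere gives a valid configuration, so the answer to $\RDP$ is yes; but in every configuration the subtrees hanging off an occurrence of $a_0$ in the directions $\mathtt a^{\pm1},\mathtt b$ are entirely dead, so the geodesic between any two occurrences of $a_0$ reads $\mathtt b^{-1}\mathtt a^{j}\mathtt b$, no three occurrences are collinear, and no geodesic ray meets $a_0$ infinitely often. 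Your algorithm answers no on this yes-instance.

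This is precisely the phenomenon the paper's proof is organised around: the recurrent occurrences may only be realisable at bounded distance from a geodesic, never on one, which is why the balloon of Lemma \ref{lem:equiv} is permitted a label of the form $uvu^{-1}$ together with the matching condition $a_i=a_{n-i}$ on the two copies of $u$. To salvage your automaton you would need states that record an entire excursion $u$ off the spine and verify that the excursion repeats with identical labels at each period --- which is exactly the balloon condition; a B\"uchi condition on letters alone cannot express it.
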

 
Fix $S$ a free generating set for $\F_n$. Let $A$ be an alphabet, $\Fo$ a set of nearest neighbor forbidden patterns and $a_0\in A$ the tile we want an infinity of.  The goal of our algorithm will be to find a particular structure within the tileset graph $\Gamma_{\Fo}$ called a \emph{simple balloon}. We will then show that there is such a structure if and only if there is a configuration in $X_{\Fo}$ where $a_0$ occurs infinitely often. 

\begin{definition}
A \emph{balloon} $B$ is an undirected path in $\Gamma_{\Fo}$, starting and ending at $a_0$, which is specified by a sequence of letters and generators $B = a_0 s_1 a_1 \ ... \ s_{n-1} a_{n-1} s_n a_0$, with $a_i\in A$, $s_i\in S\cup S^{-1}$, such that $s_i = s$ if $(a_{i-1},a_{i},s)$ is an edge in $\Gamma_{\Fo}$, and $s_i = s^{-1}$ if $(a_i, a_{i-1},s)$ is an edge in $\Gamma_{\Fo}$, its label $s_1\ ...\ s_n$ is reduced, and if there exists $k\leq \lceil\frac{n}{2}\rceil-1$ such that $$s_1 \ ... \ s_k = (s_{n-k+1} \ ... \ s_{n})^{-1},$$ then $a_i = a_{n-i}$ for $i\in\{1,...,k\}$. We say the balloon is \emph{simple} if for every $i\in\{1,...,n-1\}$ the pair $a_i s_{i+1}$ never repeats.
\end{definition}

The last condition in the definition of a balloon asks that if the label is not cyclically reduced, $w = uvu^{-1}$ for instance, then the first $|u|$ must be the same as the last $|u|$ tiles in reverse order (see Figure \ref{fig:balloon}).

Given a simple balloon, we want to create a configuration by repeating the letter/generator sequence it defines. Nevertheless, this only covers a portion of the group. To guarantee we will be able to complete a configuration we must ask for each letter to have the ability to be extended to cover the whole group, and thus any portion.
\begin{figure}[h]
	\centering
	\begin{subfigure}{0.4\textwidth}
		\includestandalone{figures/ejemplo1}
	\end{subfigure}
	\begin{subfigure}{0.45\textwidth}
		\includestandalone[scale=0.9]{figures/ejemplo2}
	\end{subfigure}
	\caption{On the left, a balloon given by $C = \torg \ \mathtt{b} \ \tbl \ \mathtt{a} \ \tr \ \mathtt{b}\ \tw\ \mathtt{a}\ \tbl\ \mathtt{b}^{-1}\ \torg$ based at $a_0 = \torg$, and generators $\mathtt{a},\mathtt{b}\in S$. On the right, a portion of a configuration from $X_{\Fo}$ obtained by repeating the motif defined by the vertices of the balloon.}
		\label{fig:balloon}
\end{figure}
\vspace{-0.3cm}
\begin{definition}
	We say the set of forbidden patterns is \emph{complete} if there exists $\Col(A)\subseteq A$ and a map $f\colon\Col(A)\times (S\cup S^{-1})\to \Col(A)$ such that for all $a\in \Col(A)$, both $(a, f(a,s), s)$ and $(f(a,s^{-1}), a, s)$ are edges in $\Gamma_{\Fo}$ for $s\in S$.\footnote{This is also known as condition $(\star)$ in \cite{hellouin2020necessary}}
\end{definition}
	
Piantadosi showed in \cite{piantodosi2008free} that $X_{\Fo}$ is non-empty if and only if $\Fo$ is complete. Furthermore, $\Col(A)$ is computable from $A$ and $\Fo$, and every letter in a configuration $x\in X_{\Fo}$ is contained in $\Col(A)$.

\begin{lemma}
\label{lem:equiv}
	There exists a configuration $x\in X_{\Fo}$ containing $a_0$ infinitely many times if and only if there exists a simple balloon $B$ in $\Gamma_{\Fo}$ based at $a_0$, whose vertices are all in $\Col(A)$.
\end{lemma}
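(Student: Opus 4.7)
I prove the two directions separately, using Piantadosi's result that all letters appearing in any $x\in X_{\Fo}$ lie in $\Col(A)$, so the vertex condition is automatic in both constructions.

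For the backward direction, given a simple balloon $B = a_0 s_1 a_1 \ldots s_n a_0$, let $w = s_1 \ldots s_n$ and let $\bar{w} \in \F_n$ be the associated element; set $u_i = s_1\ldots s_i$. I build $x$ in two stages. First, I place labels along the \emph{spine} $\{\bar{w}^m u_i : m \in \Z,\ 0 \leq i \leq n-1\}$ by setting $x(\bar{w}^m u_i) = a_i$; the only collisions arise from the identity $\bar{w}\cdot u_i = u_{n-i}$, valid for $i \leq k$ in the decomposition $w = u v u^{-1}$ with $|u| = k$, and they are resolved by the balloon condition $a_i = a_{n-i}$. All spine edges coincide with edges of $B$ in $\Gamma_{\Fo}$. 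Second, I extend $x$ to the rest of the Cayley tree by propagating outward from the spine using the completeness map $f\colon\Col(A)\times (S\cup S^{-1})\to\Col(A)$; all newly generated edges automatically lie in $\Gamma_{\Fo}$. Since $\bar{w}$ has infinite order in $\F_n$, the resulting $x \in X_{\Fo}$ contains $a_0$ at infinitely many positions.

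For the forward direction, let $X = \{g\in\F_n : x(g) = a_0\}$ be infinite. The strategy is to exhibit two elements $g_1, g_2 \in X$ whose local patterns of $x$ coincide on a ball large enough to cover the cancellation depth of the reduced word $g_1^{-1}g_2$. Such a pair arises from applying K\"onig's lemma to the minimal subtree of $\F_n$'s Cayley tree spanned by $X$---giving an infinite direction of $X$-accumulation---combined with pigeonhole on the finite set of possible ball-patterns around $X$-elements. From this pair, reading the labels of $x$ along the tree geodesic $g_1 \to g_2$ yields a walk $a_0 s_1 a_1 \ldots s_n a_0$ in $\Gamma_{\Fo}$ with reduced label $w$; for $w = uvu^{-1}$ with $|u| = k$ inside the matching radius, the group-theoretic identity $g_1 \cdot u_{n-i} = g_2 \cdot u_i$ (valid for $i \leq k$) combined with the pattern matching yields $a_{n-i} = x(g_1 u_{n-i}) = x(g_2 u_i) = x(g_1 u_i) = a_i$, i.e.,\ the balloon condition.

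To pass to a \emph{simple} balloon I iteratively excise any segment between a repeated pair $(a_i, s_{i+1}) = (a_j, s_{j+1})$ with $1 \leq i < j \leq n-1$; the excision preserves the balloon property, since the new label remains reduced and a case analysis of the cancellation depth shows the matching condition persists, while the vertex set stays within $\Col(A)$. The principal technical obstacle is in the forward direction: pinning down the pair $g_1, g_2$ with matching patterns on a ball whose radius controls the cancellation depth of $g_1^{-1}g_2$. This requires a careful use of the K\"onig ray on the $X$-subtree rather than a naive pigeonhole on a bounded neighborhood, since $X$ may be sparsely distributed in $\F_n$.
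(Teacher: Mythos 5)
Your backward direction is essentially the paper's argument: lay the balloon's letters along the cosets $\bar{w}^t\bar{u}_i$, use the condition $a_i=a_{n-i}$ to resolve the overlaps coming from $\bar{w}\bar{u}_i=\bar{u}_{n-i}$ for $i\le k$, and complete the configuration using the completeness map on $\Col(A)$. That part is sound (one should note explicitly that the constraint $k\le\lceil n/2\rceil-1$ forces $v\neq\varepsilon$, hence $\bar{w}\neq 1$, so the spine really is infinite).

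The forward direction, however, has a genuine gap at its central step --- the very step you flag as "the principal technical obstacle." You need two $a_0$-positions $g_1,g_2$ whose $x$-patterns agree on balls of radius at least the cancellation depth $k$ of $g_1^{-1}g_2$, but $k$ depends on the pair you choose: a pigeonhole on ball-patterns of any fixed radius $r$ can return a pair whose geodesic has cancellation depth exceeding $r$ (for instance, when the $a_0$'s sit at the ends of long "hairs" hanging off a common ray, $k$ is essentially the length of the common suffix of the hairs, which no fixed-radius pattern comparison controls). Saying that this "requires a careful use of the K\"onig ray" is naming the missing argument, not giving it. The paper sidesteps the difficulty entirely: it bases the balloon at a \emph{single} $a_0$-position and applies the pigeonhole to an \emph{arbitrary} repeated letter $x(\omega(j_1))=x(\omega(j_2))$ along a K\"onig ray through the subtree spanned by the $a_0$-positions, with an $a_0$-position rooted strictly between $j_1$ and $j_2$. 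The conjugating segment $u$ of the resulting label $uvu^{-1}$ is then traversed twice along the \emph{same} path in the tree (out and back along $\omega(j_1)$ in one case, down from the $a_0$-position to the ray and back up in the other), so the matching condition $a_i=a_{n-i}$ holds because both letters are read off the same group element --- no coincidence of ball-patterns around two distinct points is ever needed. To repair your proof you would either have to actually establish the existence of your matching pair (which I do not see how to do) or switch to this single-basepoint construction.
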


\begin{proof}
	Suppose we have a simple balloon $B = a_0 s_1 a_1 \ ... \ s_{n-1} a_{n-1} s_n a_0$ in $\Gamma_{\Fo}$ based at $a_0$ with $a_i\in\Col(A)$ and label $w = uvu^{-1}$ where $u = s_1 \ ... \ s_k$ with $k\leq\lceil\frac{n}{2}\rceil-1$. The condition over $k$ implies that $v\neq\varepsilon$. We define a configuration $x\in X_{\Fo}$ as follows: for every $t\in\N$, $x(\overline{w}^t) = a_0$ and $x(\overline{w^ts_1\ ...\ s_i}) = a_i$ with  $i\in\{1,\ ...,\ n-1\}$ (see Figure \ref{fig:balloon}). Because $B$ is a balloon, $x$ is well defined, as the balloon's definition guarantees 
	$$x(\overline{w^tuv}s_k^{-1}\ ...\ s_i^{-1}) = a_{n-i} = a_i = x(\overline{w^{t+1}s_1\ ...\ s_i}).$$
	Finally, because every letter belongs to $\Col(A)$, the rest of the configuration can be completed without forbidden patterns. Therefore, $x\in X_{\Fo}$.\\
	
	Conversely, suppose there exists $x\in X_{\Fo}$ where $a_0$ occurs infinitely often. Without loss of generality we can assume $x(1_{\F_n}) = a_0$. Recall that $x(\F_n)\subseteq\Col(A)$. Let us denote the set of elements $w\in\F_n$ where $x(w) = a_0$ by $\mathcal{O}$. Because $\mathcal{O}$ is infinite, there exists $s_0\in S\cup S^{-1}$ such that infinitely many words in $\mathcal{O}$ begin with $s_0$. Furthermore, there exists $s_1\in S\cup S^{-1}$ with $s_1 \neq s_0^{-1}$ such that infinitely many words in $\mathcal{O}$ begin with $s_0s_1$. By iterating this argument, we obtain a one-way infinite sequence $y\in (S\cup S^{-1})^\N$ such that $y(i) \neq y(i+1)^{-1}$ for all $i\in\N$. Let $\omega(i) = y(0)\ ...\ y(i-1)\in\F_n$. By definition, for every $i\in\N$ there are infinitely many words in $\mathcal{O}$ that begin with $\omega(i)$. We will say $w\in\mathcal{O}$ is \emph{rooted} at $i\in\N$ if $w = \omega(i)v$ for some $v\in (S\cup S^{-1})^*$, and such that the concatenation is reduced. Because there are infinitely many words rooted along some point of $y$, and $\Col(A)$ is finite, there exist $j_1 < i < j_2$ and $w\in\mathcal{O}$ such that $x(\omega(j_1)) = x(\omega(j_2))$ and $w$ is rooted at $i$. Using this fact, we will create a balloon depending on two cases.
	\begin{enumerate}
		\item If $y(j_1 - 1)\neq y(j_2 -1)$, and calling $a_j = x(\omega(j))$, define the balloon that represents going from $x(1_G)$ to $x(\omega(j_2))$ by the path $\omega(j_2)$ and then return via the path $\omega(j_1)$ (see Figure~\ref{fig:caso1}). Formally, $$B = a_0\ y(0)\ a_1\ ... \ y(j_1 -1)\ a_{j_1}\ ...\ y(j_2 -1)\ a_{j_2}\ y(j_1-1)^{-1}\ a_{j_1-1}\ ...\ y(0)^{-1} a_0,$$ which is labeled by $\omega(j_2)\omega(j_1)^{-1}$, a reduced word.
		\begin{figure}[h]
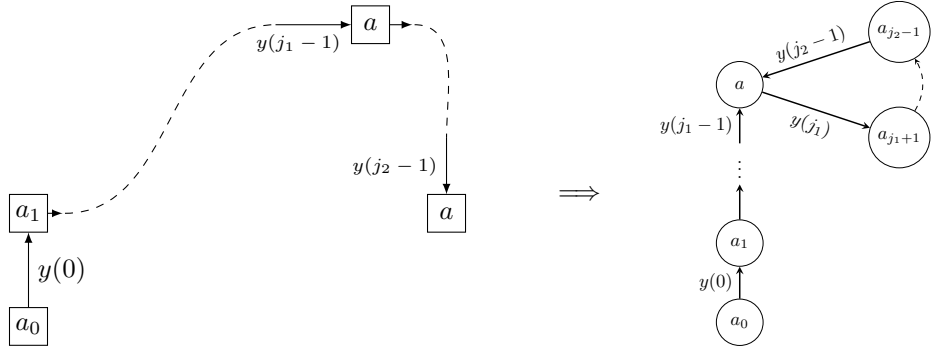

			\centering
			\begin{subfigure}{0.6\textwidth}
				\includestandalone[scale=1]{figures/caso1}
			\end{subfigure}
			\begin{subfigure}{0.3\textwidth}
				\includestandalone[scale=0.7]{figures/caso1_balloon}
			\end{subfigure}
			\caption{On the left, the path defined by $y$ in the configuration. This is an example of the first case, where $y(j_1-1)\neq y(j_2-1)$ and the repeated letter is $a = x(\omega(j_1)) = x(\omega(j_2))$. On the right, the corresponding balloon within the tileset graph $\Gamma_{\Fo}$.}
			\label{fig:caso1}
		\end{figure}	
		
		\item If $y(j_1 - 1) = y(j_2 -1)$, then $y(j_1)\neq y(j_2 -1)^{-1}$. Let $v\in (S\cup S^{-1})^k$ such that $w = \omega(i)v$. Once again, calling $a_j = x(\omega(j))$ and $b_j = x(\overline{wv_1 ... v_j})$, we define the balloon
		$$B = a_0\ v_k^{-1}\ b_{k-1}\ v_{k-1}^{-1}\ ...\ v_1^{-1}\ a_i \ y(i)\ ...\ y(j_2 -1)\ a_{j_2}\ y(j_1)\ ...\ y(i-1)\ a_i\ v_1\ b_1\ ...\ v_k\ a_0,$$
		which is labelled by $v y(i)\ ...\ y(j_2 -1)y(j_1)\ ...\ y(i-1) v^{-1}$, a reduced word (see Figure~\ref{fig:caso2}).
		\begin{figure}[h]
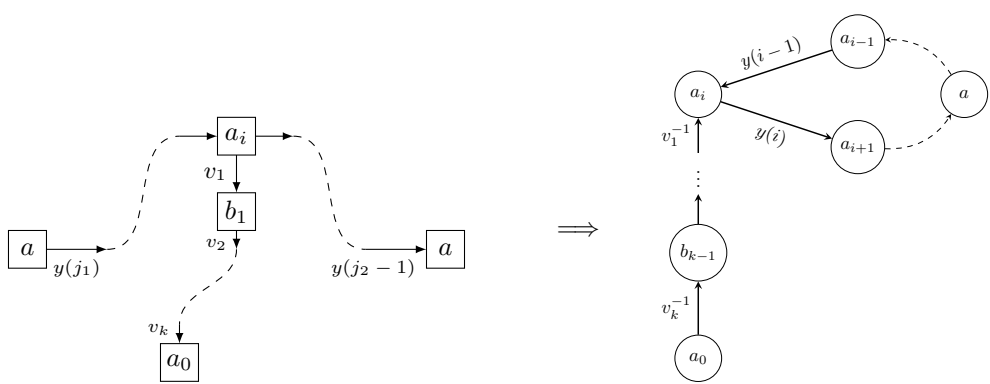

			\centering
			\begin{subfigure}{0.6\textwidth}
				\includestandalone[scale=1]{figures/caso2}
			\end{subfigure}
			\begin{subfigure}{0.3\textwidth}
				\includestandalone[scale=0.7]{figures/caso2_balloon}
			\end{subfigure}
			\caption{On the left, the path defined by $y$ in the configuration as well as the path leading from the root $x(\omega(i))$ to $w$. This is an example of the second case, where $y(j_1-1)=y(j_2-1)$ and the repeated letter is $a = x(\omega(j_1)) = x(\omega(j_2))$. On the right, the corresponding balloon within the tileset graph $\Gamma_{\Fo}$.}
			\label{fig:caso2}
		\end{figure}
	\end{enumerate}
	\vspace{-0.2cm}
	Finally, if $B$ contains a repeated letter/generator pair, we can simply cut the portion between them while preserving all other balloon conditions. This guarantees that $B$ will be a simple balloon in $\Gamma_{\Fo}$ based at $a_0$, with all its vertices in $\Col(A)$.

\end{proof}

\begin{proof}[Proof of Theorem \ref{thm:free}]
	Given a finite set of nearest neighbor forbidden patterns $\Fo$ and a letter $a_0$, by Lemma \ref{lem:equiv}, it suffices to search for simple balloons in $\Gamma_{\Fo}$ whose vertices are contained in $\Col(A)$. A simple balloon passes through each vertex-label pair at most once; and there are a finite number of such paths starting and ending in $a_0$, so we can check whether they satisfy the simple balloon conditions. Therefore, we can effectively decide whether the conditions of Lemma \ref{lem:equiv} are met, making the recurrence problem decidable.
\end{proof}

\subsection{Consequences and Conjectures}

As previously stated, we are interested in understanding the class of groups that have decidable seeded domino problem, and the class of groups that have decidable recurring domino problem.

\begin{theorem}
	Let $G$ be a virtually free group. Then, both $\SDP(G)$ and $\RDP(G)$ are decidable.
\end{theorem}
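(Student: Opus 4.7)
The plan is to reduce both problems to the case of a free group of finite rank, using the finite-index inheritance established in Lemma~\ref{lem:fi}, and then handle the free case with the tools developed in Section~\ref{sec:free}. Fix a finite-index subgroup $F\leq G$ with $F\cong \F_n$ for some $n$. By Lemma~\ref{lem:fi}, $\SDP(G)\equiv_p\SDP(\F_n)$ and $\RDP(G)\equiv_p\RDP(\F_n)$, and since positive-reducibility preserves decidability, it suffices to show that each problem is decidable on $\F_n$.

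The recurring variant is immediate: $\RDP(\F_n)$ is decidable by Theorem~\ref{thm:free}.

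For the seeded variant, given an input $(A,\Fo,a_0)$, I would first compute the completeness subset $\Col(A)\subseteq A$ and the associated map $f\colon \Col(A)\times(S\cup S^{-1})\to\Col(A)$ from Piantadosi's characterization recalled in Section~\ref{sec:free}. The claim to prove is that there exists $x\in X_\Fo$ with $x(1_{\F_n})=a_0$ if and only if $a_0\in \Col(A)$. The forward direction is immediate, since every letter appearing in a configuration of $X_\Fo$ lies in $\Col(A)$. For the converse, set $x(1_{\F_n})=a_0$ and extend inductively along the Cayley tree: for every element $g\in\F_n$ already assigned a value in $\Col(A)$ and every $s\in S\cup S^{-1}$ such that writing $g=\overline{w}$ with $w$ reduced keeps $ws$ reduced, declare $x(gs)=f(x(g),s)$. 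Because the Cayley graph of $\F_n$ is a tree, this recursion is well-defined and never over-constrains a vertex. The two clauses in the definition of completeness, $(a,f(a,s),s)$ and $(f(a,s^{-1}),a,s)$ being edges of $\Gamma_\Fo$, then ensure that no nearest-neighbor forbidden pattern is introduced, regardless of whether the edge is traversed in the direction of a generator or its inverse. Since $\Col(A)$ is computable from $(A,\Fo)$, this reduces $\SDP(\F_n)$ to checking whether $a_0\in \Col(A)$, which is a decidable condition.

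The whole argument is essentially bookkeeping once the right machinery is cited. The only point requiring minor care is ensuring that the inductive extension in the converse direction of the seeded case handles both generators and their inverses correctly, which is exactly what the symmetric condition on $f$ is designed to do.
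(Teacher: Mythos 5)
Your argument is correct, and for the recurring variant it coincides with the paper's: Theorem~\ref{thm:free} combined with the finite-index equivalence of Lemma~\ref{lem:fi}. For the seeded variant, however, you take a genuinely different route. The paper disposes of $\SDP(G)$ in one line by noting that the seeded problem is expressible in MSO logic and that virtually free groups have decidable MSO theory. You instead reduce $\SDP(G)$ to $\SDP(\F_n)$ via Lemma~\ref{lem:fi} and then decide $\SDP(\F_n)$ directly: a configuration of $X_{\Fo}$ carrying $a_0$ at the identity exists if and only if $a_0\in\Col(A)$. The forward implication is the paper's remark that every letter occurring in a configuration lies in $\Col(A)$; the converse is Piantadosi's tree extension, and you correctly identify that the two symmetric clauses in the definition of completeness, $(a,f(a,s),s)$ and $(f(a,s^{-1}),a,s)$ both being edges of $\Gamma_{\Fo}$, are exactly what is needed so that no forbidden pattern arises whether an edge of the Cayley tree is traversed along a generator or its inverse. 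The one point worth making explicit is that the forward implication needs $\Col(A)$ to be the \emph{maximal} subset satisfying the completeness condition (the union of two such subsets is again one, so the maximal one exists and is computable); this is the convention implicit in the paper's statement that every letter of a configuration lies in $\Col(A)$, so your citation is legitimate. Your approach is more self-contained and elementary than the MSO citation, makes the seeded case structurally parallel to the recurring one, and yields an explicit algorithm (test membership of $a_0$ in $\Col(A)$) rather than an appeal to the decidability of MSO over virtually free groups; the paper's approach, in exchange, requires no new combinatorial lemma at all.
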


\begin{proof}
	By Theorem \ref{thm:free} we know the recurring domino problem is decidable on free groups. Adding Lemma \ref{lem:fi}, we have that it is decidable for virtually free groups. For the seeded version, as we mentioned earlier, we have that the problem is expressible in MSO logic and is therefore decidable for virtually free groups.
\end{proof}

Are these the only groups where each individual problem is decidable? The combination of Conjecture \ref{conj:domino} and Lemmas \ref{lem:dp_to_sdp} and \ref{lem:dp_to_rdp} suggest so.

\begin{corollary}
	If the Domino Conjecture is true the following are equivalent:
	\begin{itemize}
		\item $G$ is virtually free,
		\item $\DP(G)$ is decidable,
		\item $\SDP(G)$ is decidable,
		\item $\RDP(G)$ is decidable.
	\end{itemize}
\end{corollary}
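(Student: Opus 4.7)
The plan is to establish the equivalence by closing a cycle of implications that runs through ``$G$ is virtually free'' as the central node, using the Domino Conjecture at the appropriate places. First I would rephrase the task as verifying that each of the three decidability statements is equivalent to virtual freeness, since this will immediately imply pairwise equivalence.

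The implication ``$G$ virtually free $\Leftrightarrow \DP(G)$ decidable'' is exactly Conjecture \ref{conj:domino}, which we are assuming. The forward implications ``$G$ virtually free $\Rightarrow \SDP(G)$ decidable'' and ``$G$ virtually free $\Rightarrow \RDP(G)$ decidable'' are the content of the preceding theorem (relying on Theorem \ref{thm:free}, Lemma \ref{lem:fi}, and the MSO-expressibility of the seeded problem).

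For the two reverse implications it is enough to go through $\DP(G)$. By Lemma \ref{lem:dp_to_sdp} we have $\DP(G) \leq_p \SDP(G)$, and by Lemma \ref{lem:dp_to_rdp} we have $\DP(G) \leq_p \RDP(G)$. Positive reducibility preserves decidability (an instance of $\DP(G)$ is accepted iff one of the finitely many effectively computable finite sets $F_i$ of $\SDP(G)$- or $\RDP(G)$-instances lies entirely in the corresponding decidable language, which we can check), so decidability of $\SDP(G)$ or of $\RDP(G)$ forces $\DP(G)$ to be decidable, and hence forces $G$ to be virtually free by the assumed Domino Conjecture.

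There is no real obstacle: the corollary is a bookkeeping assembly of results stated just above. The only subtle point to mention explicitly is that the reductions supplied by Lemmas \ref{lem:dp_to_sdp} and \ref{lem:dp_to_rdp} are positive rather than many-one, so I would include a one-line remark justifying that positive reductions transfer decidability from the target to the source, which is all that is needed to close the cycle.
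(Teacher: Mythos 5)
Your proposal is correct and follows essentially the same route as the paper, which derives the corollary from the assumed Domino Conjecture, the theorem that virtually free groups have decidable $\SDP$ and $\RDP$, and the positive reductions of Lemmas \ref{lem:dp_to_sdp} and \ref{lem:dp_to_rdp}. Your explicit remark that positive reductions transfer decidability from target to source is a worthwhile clarification of a step the paper leaves implicit.
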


Nevertheless, virtually free groups being the only groups where the seeded domino problem or the recurring domino problem is decidable does not directly imply Conjecture \ref{conj:domino}. We can nonetheless state a conjecture for the seeded domino problem due to the fact it can be expressed in MSO logic.

\begin{conjecture}
	Let $G$ be a finitely generated group. Then, $\SDP(G)$ is decidable if and only if $G$ is virtually free.
\end{conjecture}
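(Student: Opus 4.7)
The forward direction is already established in the excerpt: virtually free groups have decidable $\SDP$ via the MSO-expressibility argument. My plan for the converse would be to show that if $G$ is not virtually free then $\SDP(G)$ is undecidable.

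First I would dispatch the easy cases by combining the paper's own tools. Lemma \ref{lem:dp_to_sdp} gives $\DP(G) \leq_p \SDP(G)$, so every group from the classes where the Domino Conjecture is already known---Baumslag--Solitar groups, polycyclic groups, hyperbolic groups, Artin groups, direct products of two infinite groups, and so on---is immediately handled. Lemma \ref{lem:subgrupo} extends this to any finitely generated group containing such a group as a subgroup, and Lemma \ref{lem:fi} lets us pass through finite-index overgroups. The genuinely open residue consists of non-virtually-free groups that avoid every such structural embedding.

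For those residual cases, I would try to reverse the MSO direction rather than attack $\DP$ directly. Muller--Schupp, combined with refinements of Seese's conjecture on bounded clique-width, characterises groups with decidable MSO theory of their Cayley graph as precisely the virtually free ones. The plan is to show that decidability of $\SDP(G)$ already forces decidability of a rich enough MSO fragment. Given an MSO sentence $\varphi$ over the Cayley graph, one would build a seeded instance $(\Fo_{\varphi}, a_{\varphi})$ whose alphabet encodes assignments to the free second-order variables of $\varphi$, whose nearest-neighbour rules enforce both consistency of the encoding and truth of the quantifier-free kernel of $\varphi$, and whose seeded tile $a_{\varphi}$ at $1_G$ witnesses an accepting global interpretation. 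If even a single uniformly undecidable MSO sentence can be captured this way across all non-virtually-free $G$, the conjecture follows.

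The main obstacle is precisely this encoding step. Seeded tilesets certify only the existence of a configuration satisfying local constraints together with a single point condition at $1_G$, whereas MSO allows quantification over arbitrary infinite subsets, which is inherently global. To make the reduction go through one needs exploitable large-scale geometry---arbitrarily large grid minors, or thick ends---forced by the failure of virtual freeness. Such geometry is accessible in polynomial-growth or Gromov-hyperbolic settings, but in pathological torsion groups such as Tarski monsters neither the subgroup strategy nor the grid-minor strategy clearly applies. I expect handling these cases to require genuinely new geometric ideas, which is the deepest source of difficulty this conjecture shares with the underlying Domino Conjecture and, in my view, the point at which a serious attack would stall.
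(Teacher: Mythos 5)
This statement is a \emph{conjecture} in the paper: there is no proof to compare against, and your proposal does not close the gap either. What the paper actually establishes is only the forward direction (virtually free $\Rightarrow$ $\SDP(G)$ decidable, via MSO-expressibility of the seeded problem together with the decidability of MSO over Cayley graphs of virtually free groups), plus the observation---via Lemma~\ref{lem:dp_to_sdp}---that the converse would follow from the Domino Conjecture, since $\DP(G)\leq_p\SDP(G)$ means decidability of $\SDP(G)$ forces decidability of $\DP(G)$. Your first two paragraphs reproduce exactly this: the MSO argument for one direction and the reduction to known cases of the Domino Conjecture for the other. That part is consistent with the paper, but it proves nothing beyond what the paper's own corollary already records.

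The genuine gap is in your third paragraph, and it is worth naming precisely. You propose to reverse the MSO direction: show that decidability of $\SDP(G)$ yields decidability of a rich enough MSO fragment, then invoke the Kuske--Lohrey/Muller--Schupp characterisation. But a seeded tiling instance certifies only the existence of \emph{one} configuration satisfying local constraints and a single point condition at $1_G$; as a logical statement this is a single existential second-order quantifier over a tuple of sets (the colour classes) followed by a universally quantified local formula. Deciding the full MSO theory of the Cayley graph requires handling arbitrary alternation of set quantifiers, which cannot be folded into one existential tiling query---this is not merely a technical obstacle but a type mismatch between the two problems. Your own closing admission that the attack ``would stall'' on this step is accurate. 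The honest summary is that the hard direction of this conjecture is exactly as open as the Domino Conjecture itself, and no argument in your proposal (or in the paper) circumvents that.
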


\section{The $k$-SAT Problem on Groups}

As mentioned in the introduction, we define a generalized version of the $k$-$\SAT$ problem for finitely generated groups. This version is slightly different from the one introduced by Freedman~\cite{freedman1999sat} in order to correctly capture the structure of finitely generated groups.\\

Let $G$ be a finitely generated group and $H\leq G$ a finitely generated subgroup. As variables for our formulas we use elements of $G$. For $g\in G$, we denote its negation by $\neg g$ and we use the ambiguous notation $g'$ to refer to either $g$ or $\neg g$ depending on the formula. We denote the set of formulas over $G$ containing $k$ literals as $N_k$, that is, $\phi\in N_k$ if
$$ \phi = \bigwedge_{i=1}^{m}\left((g_{i1})' \vee \ ... \ \vee (g_{ik})' \right).$$

Next, we define the set of formulas $HN_k$ as all the formulas of the form:
$$\bigwedge_{h\in H}\bigwedge_{i=1}^{m}\left((hg_{i1})' \vee \ ... \ \vee (hg_{ik})' \right)$$

We use $\phi(h)$ to denote the formula $\phi$ with each literal left-multiplied by $h$.

\begin{definition}
	We say a formula $\phi\in HN_k$ is satisfiable, if there exists an assignment of truth values $\alpha:G\to\{0,1\}$ such that:
	$$\bigwedge_{h\in H}\bigwedge_{i=1}^{m}\left(\alpha(hg_{i1})' \vee \ ... \ \vee \alpha(hg_{ik})' \right) = 1.$$
\end{definition}

Let $S$ be a finite generating set for $G$. To arrive at a valid decision problem, we will specify a function by a set of words over $S\cup S^{-1}$ that will evaluate to the literals of the function, and a list of words, also over $S\cup S^{-1}$, that will specify a generating set for a subgroup. Formally, an \emph{input formula} is a formula of the form

$$\phi = \bigwedge_{i=1}^m(v_{i1}' \vee \ ... \ \vee v_{ik}'),$$

where $v_{ij}\in (S\cup S^{-1})^*$ for all $i\in \{1,...,m\}$ and $j\in\{1,...,k\}$, such that its evaluated version 
$$\bar{\phi} = \bigwedge_{i=1}^m(\bar{v}_{i1}' \vee \ ... \ \vee \bar{v}_{ik}')$$
belongs to $N_k$.

\begin{definition}
	Let $G$ be a finitely generated group, $S$ a finite generating set and $k>1$. The \emph{$k$-$\SAT$ problem over $G$} is the decision problem that given an input formula $\phi$ and $\{w_i\}_{i=1}^n$ determines if the formula $\bigwedge_{h\in H}\bar{\phi}(h)$ is satisfiable, where $H = \langle w_1, ..., w_n\rangle$.
\end{definition}

Notice that the decidability of this problem does not depend on the chosen generating set, as we can re-write any input into any other generating set. We therefore denote this problem by $k$-$\SAT(G)$.

The first observation to make is that this problem depends on the computational structure of the group.

\begin{lemma}
	The subgroup membership problem of $G$ many-one reduces to $\textnormal{co}2\text{-}\SAT(G)$.
\end{lemma}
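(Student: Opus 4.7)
The plan is to construct an explicit many-one reduction that, given a subgroup membership instance $(g, w_1, \ldots, w_n)$ asking whether $g \in H := \langle w_1, \ldots, w_n \rangle$, outputs a $2$-SAT instance whose $H$-quantified formula is unsatisfiable precisely when $g \in H$. The natural candidate is the tiny formula that directly asserts ``$1_G$ is true and $g$ is false'', padded to match the 2-literal format.

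Concretely, I would take the input formula
$$\phi \; = \; (\varepsilon \vee \varepsilon) \wedge (\neg g \vee \neg g),$$
together with the subgroup generators $w_1, \ldots, w_n$, as the output of the reduction. This is clearly computable from the input. The satisfying assignments of $\bigwedge_{h \in H}\bar\phi(h)$ are exactly the $\alpha:G\to\{0,1\}$ with $\alpha(h) = 1$ and $\alpha(hg) = 0$ for every $h \in H$, so the whole problem reduces to a coset-disjointness question.

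For correctness, first suppose $g \in H$. Then $g$ itself is an element of $H$, so applying the second clause with $h = 1_G$ forces $\alpha(g) = 0$, while applying the first clause with $h = g$ forces $\alpha(g) = 1$: contradiction, hence the formula is unsatisfiable. Conversely, if $g \notin H$, then the cosets $H$ and $Hg$ are disjoint, so the assignment $\alpha(x) = 1$ if $x \in H$ and $\alpha(x) = 0$ otherwise is well-defined and satisfies every $\bar\phi(h)$: for $h \in H$ we have $\alpha(h) = 1$, and $hg \in Hg$ is outside $H$, giving $\alpha(hg) = 0$. Thus $g \in H$ if and only if the constructed 2-SAT instance lies in $\mathrm{co\text{-}}2\text{-}\SAT(G)$.

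There is really no substantive obstacle here beyond two bookkeeping points: first, to respect the formal requirement that each clause contain exactly $k=2$ literals, one simply duplicates the literal, since $(\ell \vee \ell)$ has the same truth value as $\ell$; second, one must observe that whether or not $\alpha$ can be defined on $G \setminus (H \cup Hg)$ is immaterial, because no clause of $\bar\phi(h)$ constrains values outside $H \cup Hg$. Both are routine, and the reduction is evidently computable, so this establishes $\mathrm{SubgroupMembership}(G) \leq_m \mathrm{co\text{-}}2\text{-}\SAT(G)$.
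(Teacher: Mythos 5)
Your proposal is correct and follows essentially the same route as the paper: both encode the single constraint ``$\alpha(\varepsilon)\neq\alpha(u)$'' as a $2$-SAT formula over $H$ and observe that it is unsatisfiable exactly when $\bar u\in H$ (the paper forces $\alpha=0$ on $H$ and $1$ on $Hu$, you do the mirror image). The only cosmetic difference is the padding to two literals per clause: you duplicate the literal, while the paper pads with a spare variable $s$ via $(\neg\varepsilon\vee s)\wedge(\neg\varepsilon\vee\neg s)$; the paper's definition of $N_k$ does not forbid repeated literals, so your version is also admissible.
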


The subgroup membership problem of a f.g. group $G$ is the decision problem that takes as input words $u, \{w_i\}_{i=1}^n$ over $S\cup S^{-1}$ and determines if $\bar{u}\in\langle w_1, ..., w_n\rangle$.

\begin{proof}
	Let $u, \{w_i\}_{i=1}^n\in (S\cup S^{-1})^*$ be an instance of the subgroup membership problem. We define the formula 
	$$\psi = (\neg\varepsilon \vee s)\wedge (u \vee s)\wedge (\neg\varepsilon \vee \neg s)\wedge (u \vee \neg s),$$
	
	 for some fixed $s\in S$, which along with the words $w_i$ is an input to $2$-$\SAT(G)$. Notice that $\psi$ is equivalent to the formula $\neg\varepsilon \wedge u$. Let us denote $H = \langle w_1, ..., w_n\rangle$ and $\Psi = \bigwedge_{h\in H}\bar{\psi}(h)$.
	
	Suppose $\bar{u}\in H$. Then, we have that $\bar{\psi}(1_G) \wedge \bar{\psi}(\bar{u}) = (\neg 1_G \wedge \bar{u}) \wedge (\neg\bar{u}\wedge \bar{u}^2)$ is never satisfiable, and thus $\Psi$ is not satisfiable. On the other hand, if $\bar{u}\not\in H$, we can define the assignment $\alpha\colon G\to\{0,1\}$ by $\alpha(h) = 0$ and $\alpha(hu) = 1$ for all $h\in H$, and $\alpha(g) = 0$ for all other $g\in G\setminus H$. This way $\neg\alpha(h) \wedge \alpha(hu) = 1$ for all $h$, and therefore $\Psi$ is satisfied.
\end{proof}

Examples of groups with undecidable subgroup membership problems are $\F_n\times\F_n$ \cite{mihailova1968occurrence}, some hyperbolic groups \cite{rips1982small}, as well as groups with undecidable word problem.

\begin{lemma}
	\label{lem:sat_to_dom}
	Let $G$ be a finitely generated group with decidable subgroup membership problem.
	Then, for every $k\geq 2$ we have that $k$-$\SAT(G)\leq_m\DP(G)$.
\end{lemma}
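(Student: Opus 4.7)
The plan is to reduce $k$-$\SAT(G)$ first to $\DP(H)$, where $H = \langle \bar w_1, \ldots, \bar w_n\rangle$ is the subgroup specified by the input, and then to invoke the inheritance property $\DP(H)\leq_m\DP(G)$ for finitely generated subgroups (a result from \cite{aubrun2018domino} recalled after Conjecture~\ref{conj:domino}). Let $X = \{\bar v_{ij}\}\subseteq G$ be the finite set of evaluated variables of the input formula $\phi$. A satisfying assignment $\alpha:G\to\{0,1\}$ of $\bigwedge_{h\in H}\bar\phi(h)$ only matters on $HX$, so I encode it as $\gamma\colon H\to\{0,1\}^X$ via $\gamma(h)(x) = \alpha(hx)$. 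Such a $\gamma$ corresponds to a satisfying assignment iff \emph{(a)} for every $h\in H$, the valuation $\gamma(h)$ satisfies every clause of $\phi$, and \emph{(b)} whenever $hx_1 = h'x_2$ with $h,h'\in H$ and $x_1,x_2\in X$, one has $\gamma(h)(x_1)=\gamma(h')(x_2)$.

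Condition~\emph{(b)} rewrites as: for every pair $(x_1,x_2)\in X^2$ with $h_0 := x_1 x_2^{-1}\in H$, and every $h\in H$, $\gamma(h)(x_1) = \gamma(h h_0)(x_2)$. This is where the decidability hypothesis enters: subgroup membership decides, for each of the finitely many pairs $(x_1,x_2)\in X^2$, whether $x_1 x_2^{-1}\in H$; and since the word problem of $G$ is the special case of subgroup membership with trivial subgroup, it is also decidable, so by enumerating words over $\{w_1^{\pm 1},\ldots,w_n^{\pm 1}\}$ and testing equality in $G$ I can further compute an explicit word $u_{x_1,x_2}$ in the generators of $H$ representing $h_0$. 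I then build an SFT over $H$ with generating set $T=\{\bar w_i\}$ and alphabet $A=\{0,1\}^X$, whose forbidden patterns are: the single-site patterns $\beta\in A$ that falsify some clause of $\phi$ (encoding~\emph{(a)}); and, for each admissible pair $(x_1,x_2)$, the two-site patterns on support $\{1_H, h_0\}$ coded via $u_{x_1,x_2}$ that disagree in coordinates $x_1$ and $x_2$ (encoding~\emph{(b)}). The standard block-coding construction converts this pattern-coding SFT into a nearest-neighbor SFT over $H$, and the reduction is many-one since there is no target letter to branch over, unlike in Lemma~\ref{cambiogen}. Non-emptiness is then equivalent to satisfiability: the forward direction is immediate from the definition of $\gamma$, and in the backward direction a valid $\gamma$ lifts to a satisfying $\alpha$ by $\alpha(hx):=\gamma(h)(x)$ on $HX$ (well-defined by~\emph{(b)}) and arbitrary elsewhere. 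Composing with $\DP(H)\leq_m\DP(G)$ gives the claim.

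The principal obstacle is condition~\emph{(b)}: it relates positions in $H$ that may be arbitrarily far apart in the $T$-metric, so obtaining a finite-support SFT requires an effective word in $T$ representing each shift $h_0=x_1 x_2^{-1}\in H$. Decidability of subgroup membership, together with the decidable word problem it entails, is precisely what makes this step effective and hence the whole reduction.
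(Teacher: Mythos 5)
Your proposal is correct, and its core encoding is the same as the paper's: letters are $\{0,1\}$-valuations of the finitely many clause variables that satisfy every clause, and consistency between overlapping translates is enforced by two-site patterns indexed by the pairs $(x_1,x_2)$ with $x_1x_2^{-1}\in H$ --- which is exactly where decidable subgroup membership enters in both arguments. The only difference is the packaging: the paper builds the SFT directly over $G$ with the enlarged generating set $S\cup H_L$ (where $H_L$ collects the elements $v_{ab}v_{cd}^{-1}$ that lie in $H$), so the consistency constraints are nearest-neighbour by construction and no witness words in the generators of $H$ are needed; you instead build the SFT over $H$ itself and then invoke $\DP(H)\leq_m\DP(G)$, which forces you to compute a word over $\{w_1^{\pm1},\dots,w_n^{\pm1}\}$ representing each $x_1x_2^{-1}$. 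Your handling of that extra step is sound --- decidable subgroup membership yields a decidable word problem, so the enumeration terminates on a correct witness --- but the paper's route avoids it entirely. Both give valid many-one reductions.
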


\begin{proof}
	Let $S$ be a finite generating set for $G$, $\phi$ an input formula and $\{w_i\}_i$ words over $S\cup S^{-1}$ that form an instance of $k$-SAT$(G)$ such that 	
	$$\phi = \bigwedge_{i=1}^m(v_{i1}' \vee \ ... \ \vee \ v_{ik}'),$$
	with $v_{ij}\in (S\cup S^{-1})^*$. Let us once again denote $H = \langle w_1, ..., w_n\rangle$. 
	Our alphabet, $A$, consists of 0-1 matrices of size $m\times k$ that satisfy $\phi$, that is, all matrices $M\in\{0,1\}^{m\times k}$ such that
	$$\bigwedge_{i=1}^{m}\left((M_{i1})' \vee \ ... \ \vee (M_{ik})' \right) = 1.$$
	To obtain this alphabet we must solve the standard $k$-SAT problem, which is computable.
	
	For convenience, let us denote the finite subset of words involved in the formula by $L = \{v_{ij}\mid 1\leq i\leq m, \ 1\leq j \leq k\}$. In addition, we define the set $H_{L}$ as the set of all $h_{abcd}\in H\cap LL^{-1}$, where
	$$h_{abcd} = \begin{cases} v_{ab}v_{cd}^{-1} \ \text{ if } \ v_{ab}v_{cd}^{-1}\in H, \\ 1_H \ \ \text{ otherwise. }\end{cases}$$
	
	Notice that $|H_L|\leq |L|^2 = m^2k^2$, and that this set is computable as $G$ has decidable subgroup membership problem. Let us proceed by specifying a set of nearest neighbor forbidden rules, $\Fo$, with respect to the generating set $S\cup H_L$. Given a configuration $x\in X_\Fo$ the idea is that, for $h\in H$, the matrix $x(h)$ will stock the values assigned to the elements of $hL$. For each $h_{abcd}\in H_L$, we forbid patterns $q$ of support $\{1_H,h_{abcd}\}$, such that if $q(1_H) = M$ and $q(h_{abcd}) = \hat{M}$,
		$$M_{ab} \neq \hat{M}_{cd}.$$
			
	Suppose $X_\Fo$ contains a configuration $x$. The assignment of truth values $\alpha:G\to\{0,1\}$ is defined by
	$$\alpha(g) = \begin{cases} 0 \ \text{ if } \ g\not\in H\cdot L, \\ 
		x(h)_{ab} \ \text{ if } \ g = h\bar{v}_{ab}\end{cases}.$$
	
	It follows that $\alpha$ is well defined; if $g = h_1\bar{v}_{ab} = h_2\bar{v}_{cd}$, then $h_2 = h_1 h_{abcd}$, and by the forbidden patterns we know $(x_{h_1})_{ab} = (x_{h_2})_{cd}$. In addition, because $x\in A^G$,  for all $h\in H$, 
	$$\bigwedge_{i=1}^{m}\left(\alpha(h\bar{v}_{i1})' \vee \ ... \ \vee \alpha(h\bar{v}_{ik})' \right) = \bigwedge_{i=1}^{m}\left(x(h)_{i1}' \vee \ ... \ \vee x(h)_{ik}' \right) = 1.$$
	This means that the assignation $\alpha$ satisfies $\bigwedge_{h\in H}\bar{\phi}(h)$.\\
	
	Finally, suppose we have an assignation of truth values $\beta:G\to\{0,1\}$ that satisfies $\bigwedge_{h\in H}\bar{\phi}(h)$. Given a set of right coset representatives $R$ containing $1_G$, we define $z\in\{0,1\}^{m\times k}$ by $z(hr)_{ab} = \beta(hg_{ab})$, for all $h\in H$ and $r\in R$. Because $\beta$ satisfies $\bigwedge_{h\in H}\bar{\phi}(h)$, for all $h\in H$
	$$\bigwedge_{i=1}^{m}\left(z(h)_{i1}' \vee \ ... \ \vee z(h)_{ik}' \right) =  \bigwedge_{i=1}^{m}\left(\beta(h\bar{v}_{i1})' \vee \ ... \ \vee \beta(h\bar{v}_{ik})' \right) = 1.$$
	Therefore, $z\in A^G$.  For $h_{abcd}\in H_L$, $h_1\in H$ and $h_2 = h_1h_{abcd}$ we have that 
		$$z(h_1)_{ab} = \beta(h_1\bar{v}_{ab}) = \beta(h_1h_{abcd}\bar{v}_{cd}) = \beta(h_2\bar{v}_{cd}) =  z(h_2)_{cd}$$
	Therefore $z$ satisfies the local rules and is thus in $X_\Fo$. This concludes our reduction.
\end{proof}

Virtually free groups not only have decidable domino problem, as previously mentioned, but also have decidable subgroup membership problem (see \cite{lohrey2023subgroup}).

\begin{corollary}
	For $G$ a virtually free group, $k$-$\SAT(G)$ is decidable for all $k>1$.
\end{corollary}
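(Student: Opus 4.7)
The plan is to combine the reduction of Lemma \ref{lem:sat_to_dom} with two already-established decidability results for virtually free groups, so the argument is essentially a three-step chain rather than a new construction. First, I would invoke the fact, cited just before the corollary statement (see \cite{lohrey2023subgroup}), that every virtually free group $G$ has decidable subgroup membership problem. This is exactly the hypothesis required to apply Lemma \ref{lem:sat_to_dom}, so from it I obtain that $k\text{-}\SAT(G) \leq_m \DP(G)$ for every $k \geq 2$.

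Next, I would recall that $\DP(G)$ is decidable for every virtually free group $G$: this was established earlier in the excerpt via the expressibility of the domino problem in the MSO logic of the Cayley graph \cite{ballier2008tilings} together with the decidability of MSO on virtually free groups \cite{muller1985ends,kuske2005logical}. Since many-one reducibility to a decidable language yields decidability of the source language, the chain
\[
k\text{-}\SAT(G) \;\leq_m\; \DP(G) \;\in\; \textbf{decidable}
\]
immediately gives that $k\text{-}\SAT(G)$ is decidable for every $k > 1$.

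There is no real obstacle here; the corollary is a direct packaging of Lemma \ref{lem:sat_to_dom} with two facts about virtually free groups that are already on record. The only point worth double-checking is that Lemma \ref{lem:sat_to_dom} indeed applies uniformly in $k$ (its statement is for all $k \geq 2$), and that the reduction it produces is many-one, so the decidability transfer is automatic and does not require any additional closure property of the decidable languages. With those verifications the corollary is complete.
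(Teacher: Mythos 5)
Your proposal is correct and matches the paper's intended argument exactly: the corollary is a direct consequence of Lemma \ref{lem:sat_to_dom} combined with the decidability of subgroup membership and of the domino problem for virtually free groups, both of which are cited in the surrounding text. No further comment is needed.
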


To determine when the converse reduction is true, we introduce a new class of groups that has the required properties.

\begin{definition}
	Let $G$ be a finitely generated group. We say $G$ is \emph{scalable} if there exists a proper finite index subgroup $H\lneq G$ that is isomorphic to $G$.
\end{definition}

Examples of such groups are finitely generated abelian groups, the Heisenberg group, solvable Baumslag-Solitar groups $BS(1,n)$, Lamplighter groups $F\wr \Z$ with $F$ a finite abelian group, the affine group $\Z^d\rtimes GL(d,\Z)$ for $d\geq 2$ \cite{nekrashevych2011scale}, among others. Examples of non-scalable groups are finitely generated free groups.

\begin{theorem}
	For $G$ a scalable group, $\DP(G)\leq_m 3$-$\SAT(G)$.
\end{theorem}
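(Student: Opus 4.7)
The plan is to encode a domino instance $(A,\Fo)$ on $G$ with generating set $S$ as a $3$-$\SAT$ instance whose ``index subgroup'' is a copy of $G$ sitting as a finite-index subgroup. Since $G$ is scalable there is a proper finite-index subgroup $H_0\leq G$ with $H_0\cong G$; iterating the isomorphism inside $H_0$, we may pass to a subgroup $H\leq G$ with $H\cong G$ and $[G:H]\geq 2|A|-3$. Fix right coset representatives $R=\{r_1,\dots,r_N\}$ for $G/H$ with $1_G\in R$, an isomorphism $\iota:G\to H$, and label representatives so that $r_1,\dots,r_{|A|}$ will act as \emph{indicator variables} for the letters $a_1,\dots,a_{|A|}$, while $r_{|A|+1},\dots,r_{2|A|-3}$ are reserved as \emph{auxiliary variables}. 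Under the intended assignment, $hr_i$ should be true exactly when the tile at position $\iota^{-1}(h)\in G$ equals $a_i$.

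Next I would design the input formula $\phi$ so that $\bigwedge_{h\in H}\bar\phi(h)$ enforces, at each position $h\in H$, \emph{exactly one} indicator $hr_i$ ($1\leq i\leq|A|$) to be true, and forbids the translated patterns of $\Fo$. The ``at most one'' conditions are the $2$-clauses $(\neg r_i\vee\neg r_j)$ for $i<j\leq|A|$, padded to size $3$ by repeating a literal. The ``at least one'' condition is a clause of size $|A|$, which I convert to $3$-CNF by the standard chain encoding using the auxiliary literals $r_{|A|+1},\dots,r_{2|A|-3}$, producing $|A|-2$ clauses of size $3$. Each forbidden pattern $(a_i,a_j,s)\in\Fo$ becomes the $2$-clause $(\neg r_i\vee\neg\,\iota(s)\,r_j)$, again padded; after translation by $h\in H$ this reads $(\neg hr_i\vee\neg h\iota(s)r_j)$, which is exactly the constraint that the pattern does not appear at the edge from $\iota^{-1}(h)$ to $\iota^{-1}(h)s$ in $G$. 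The generators of $H$ provided to the $k$-$\SAT$ instance are the words representing $\iota(s)$ for $s\in S$, written over $S\cup S^{-1}$.

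For correctness, a configuration $x\in X_\Fo$ yields a satisfying assignment by $\alpha(hr_i)=1$ iff $x(\iota^{-1}(h))=a_i$ (for $h\in H$, $1\leq i\leq|A|$), setting the auxiliary variables to satisfy the chain encoding and all other variables of $G$ arbitrarily. Conversely, a satisfying assignment $\alpha$ defines $x\colon G\to A$ by $x(\iota^{-1}(h))=a_i$ where $i$ is the unique index with $\alpha(hr_i)=1$; the exactly-one block guarantees well-definedness, and the translated pattern clauses guarantee that $x$ avoids every forbidden pattern from $\Fo$ (pulled back through $\iota^{-1}$, which is a group automorphism between $H$ and $G$).

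The main obstacle I anticipate is the ``at least one of $|A|$'' clause, which is not in $3$-CNF when $|A|>3$; handling it uniformly across all translates $h\in H$ is what forces the iteration of scalability to guarantee $[G:H]\geq 2|A|-3$ auxiliary coset slots per position. A secondary concern is that the reduction must be computable: the isomorphism $\iota$ and the coset representatives must be effectively presentable as words over $S\cup S^{-1}$, and their composition under the iteration of scalability must be tracked. I would address this by taking scalability to include effective knowledge of the witnessing isomorphism on a generating set (as is the case in all the listed examples), which makes the production of $\phi$ and the subgroup generators a computable function of $(A,\Fo)$.
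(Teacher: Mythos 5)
Your reduction is correct and follows essentially the same route as the paper: scalability is iterated to obtain an isomorphic finite-index copy $H$ whose coset representatives supply, at each position $h\in H$, enough variable slots to encode a letter of $A$ plus the auxiliaries needed to reach $3$-CNF, and each forbidden pattern $(a,b,s)$ becomes a clause linking the slots at $h$ to those at $h\iota(s)$. The only difference is cosmetic: you use a one-hot encoding of the alphabet with an explicit chain (Tseitin-style) conversion of the at-least-one clause, whereas the paper codes letters in binary on $\lceil\log_2|A|\rceil$ representatives and invokes a less explicit ``dummy variable'' padding.
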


\begin{proof}
	Let $A$ be a finite alphabet of size $n$, and $\Fo$ a finite set of nearest neighbor forbidden patterns for $G$ with generating set $S$. As $G$ is scalable, there exists $H$ a proper subgroup of finite index as well as an isomorphism $F\colon G\to H$. Let $f\colon S\to S^{*}$ the function that is extended to the isomorphism $F$, that is, $\{f(s)\}_{s\in S}$ is a generating set for $H$. Fix $R\subseteq (S\cup S^{-1})^{*}$ a set of words representing a finite set of right coset representatives for $H$ that includes $1_G$. Notice that for every $m\in\N$ the subgroup $H_m = F^{m}(G)$ is isomorphic to $G$ with $[G:H_m] = [G:H]^m\geq m$. In addition, a simple computation shows that $R_m = F^{m -1}(R) \ ... \ F(R)R$ defines a set of right coset representatives for $H_m$. \\
	
	The idea of the reduction is to represent each letter of the alphabet by a unique code on the left coset representatives and then create a formula that assigns a letter to each element of $H_m$. The index of the subgroup, $m$, will be chosen so there is enough room to code the alphabet and write our formula in the required form.
	
	First off, take as a preliminary estimate $m \geq \lceil\log_2(n)\rceil$, and denote $f_m = f^m$. Let $\{\phi_a\}_{a\in A}$ be the set of formulas that code the elements of the alphabet $A$ using the words in $R_m$ as variables. This way $\phi_a(h) \equiv 1$ means we place the letter $a$ at $g = (F^m)^{-1}(h)$, and the variables are contained in $hR_m$. Our formula is given by,
	
	$$\varphi = \left(\bigvee_{a\in A} \phi_a(1_G)\right) \wedge \left(\bigwedge_{(a,b,s)\in\Fo}\neg\phi_a(1_G)\vee \neg\phi_{b}(f_m(s))\right),$$
	
	which represents the fact that we place one letter at the given point ($1_G$ in this case) and that there are no forbidden patterns in its neighborhood. If modified to be in CNF form, $\varphi$ is a conjunction of $|\Fo| + \lceil\log_2(n)\rceil^n$ clauses of $\leq n$ literals (the clauses coding the forbidden patterns contain $2\lceil\log_2(n)\rceil$ literals). By adding $(|\Fo| + \lceil\log_2(n)\rceil^n)n$ dummy variables we can transform $\varphi$ into an equivalent formula $\varphi'$ whose clauses contain exactly 3 literals.
	
	Therefore, take $m \geq (|\Fo| + \lceil\log_2(n)\rceil^n)n + \lceil\log_2(n)\rceil$, which gives us enough space in the set of left coset representatives to code the elements of the alphabet and the dummy variables. Furthermore, $\varphi'$ is computable from $A$ and $\Fo$, and $\Phi' = \bigwedge_{h\in H}\bar{\varphi}'(h)\in H_mN_3$. 
	
	Let us prove the reduction. If there exists $x\in X_{\Fo}\subseteq A^G$, we create an assignment such that for all $g\in G$, the variables in $F^m(g)R_m$ are given values so as to satisfy the code for $\phi_{x(g)}(F^m(g))\equiv 1$. Because $x$ contains no patterns from $\Fo$, $\bigwedge_{h\in H}\bar{\varphi}(h)$ will be satisfied. We finish by filling out the rest of the variables so that $\Phi'\equiv 1$.
	
	Now, if $\Phi'$ is satisfied so is $\bigwedge_{h\in H}\bar{\varphi}(h)$. Let $y\in A^G$ be the configuration defined by $y(g) = a$ if $\phi_a(F^m(g))\equiv 1$. Because the codes used make sure that the values in $F^m(g)R_m$ code a unique letter, for each $g\in G$ a unique $\phi_a(F^m(g))$ is satisfied. Thus $y$ is well defined. Finally, $y\in X_{\Fo}$ because if there was $g\in G$ such that $y(g) = a$ and $y(gs) = b$ with $(a,b,s)\in\Fo$ we would have that $\phi_a(F^m(g)) \wedge \phi_b(F^m(g)f_m(s))$ is true. This shows $\DP(G)\leq_m 3$-$\SAT(G)$. 
\end{proof}

\begin{corollary}
	$3$-$\SAT(G)$ is undecidable for finitely generated abelian groups, solvable Baumslag-Solitar groups, the Heisenberg group and affine groups.
\end{corollary}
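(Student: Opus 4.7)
The plan is to combine the preceding theorem, which gives $\DP(G)\leq_m 3$-$\SAT(G)$ for any scalable group $G$, with the known undecidability of the domino problem for each of the listed families. Since a many-one reduction from an undecidable problem preserves undecidability, it suffices to verify for each family that (i) the group is scalable and (ii) its domino problem is undecidable.

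For scalability, I would simply point to the list of examples given immediately after Definition~10: finitely generated abelian groups, solvable Baumslag--Solitar groups $BS(1,n)$, the Heisenberg group, and the affine groups $\Z^d\rtimes GL(d,\Z)$ were all exhibited there as scalable (with Nekrashevych's reference covering the affine case). So this half is a matter of citation rather than new argument, though I would briefly remark that the standing convention is to restrict to the infinite, non-virtually-free members of each family, since otherwise the statement is either vacuous or false (e.g.\ $\Z$ is abelian and scalable but has decidable $\DP$).

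For undecidability of $\DP$, I would argue family by family, invoking only results already cited in the paper. For finitely generated abelian groups of torsion-free rank at least two, the inheritance of $\DP$ from finitely generated subgroups (Section~2) combined with Berger's theorem for $\Z^2$ gives the result. For $BS(1,n)$ with $n\geq 2$ I would cite the Aubrun--Kari undecidability result~\cite{aubrun2013baumslag}. For the Heisenberg group, which is polycyclic and not virtually free, I would invoke Jeandel's undecidability result for polycyclic groups~\cite{jeandel2015aperiodic}. For the affine groups $\Z^d\rtimes GL(d,\Z)$ with $d\geq 2$, the embedding of $\Z^d$ as a finitely generated subgroup together with the same inheritance argument suffices.

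I do not anticipate any substantive obstacle: the proof is a direct assembly of the preceding theorem with four citations. The only point requiring a moment of care is the bookkeeping alluded to above, namely that one should state the corollary for the non-virtually-free members of each family (so the cited undecidability results apply), and otherwise no new work is needed beyond verifying that each case was indeed listed as scalable in the discussion following Definition~10.
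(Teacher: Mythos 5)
Your proposal is correct and is exactly the argument the paper intends (the corollary is stated without proof, but it follows by combining the scalability examples listed after the definition with the cited undecidability results for $\DP$ on each family). Your added caveat about restricting to the non-virtually-free members of each family (e.g.\ $\Z$ itself) is a worthwhile precision that the paper's statement glosses over.
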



\bibliography{biblio}
\end{document}